\newcommand{\arxiv}[1]{\href{http://arxiv.org/abs/#1}{\texttt{arXiv:#1}}}
\theoremstyle{plain}
\newtheorem{theorem}{Theorem}[section]
\newtheorem{proposition}[theorem]{Proposition}
\newtheorem{lemma}[theorem]{Lemma}
\newtheorem{mainlemma}{Lemma 6.2}
\newtheorem{maintheorem}{Theorem 6.7}
\newtheorem{corollary}[theorem]{Corollary}
\theoremstyle{definition}
\newtheorem{definition}[theorem]{Definition}
\theoremstyle{remark}
\numberwithin{equation}{section}
\DeclareMathOperator*{\row}{Row}
\DeclareMathOperator*{\aposet}{\mathbf{A}}
\DeclareMathOperator*{\gyr}{Gyr}
\DeclareMathOperator*{\togstat}{\mathfrak{T}}
\author[J.\ Striker]{Jessica Striker}
\thanks{The author  is supported in part by North Dakota EPSCoR and a grant from the NSA}
\address{Department of Mathematics,
North Dakota State University Dept \# 2750,
PO Box 6050,
Fargo, ND 58108-6050
USA}
\email{jessica.striker@ndsu.edu}
\title{The toggle group, homomesy, \linebreak and the Razumov-Stroganov correspondence}
\begin{document}

\begin{abstract}
The Razumov-Stroganov correspondence, an important link between statistical physics and combinatorics proved in 2011 by L.\ Cantini and A.\ Sportiello, relates the ground state eigenvector of the $O(1)$ dense loop model on a semi-infinite cylinder to a refined enumeration of fully-packed loops, which are in bijection with alternating sign matrices. 
This paper reformulates a key component of this proof in terms of posets, the toggle group, and homomesy, and proves two new homomesy results on general posets which we hope will have broader implications. 
\end{abstract}

\maketitle

\section{Introduction}
The Razumov-Stroganov conjecture~\cite{razstrog}, proved in 2011 by L.~Cantini and A.~Sportiello~\cite{razstrogpf} who began calling it the \emph{Razumov-Stroganov correspondence}~\cite{razstrogpf2}, provides an important bridge between the statistical physics of the $O(1)$ dense loop model on a semi-infinite cylinder and combinatorial properties of \emph{alternating sign matrices}. The conjecture remained open for several years and intrigued many combinatorialists and physicists, motivating much cross-disciplinary work. Cantini and Sportiello's first proof~\cite{razstrogpf} (which proved not only the conjecture but also several generalizations) relied on a detailed analysis of the action of \emph{gyration} on \emph{fully-packed loops}, objects in bijection with alternating sign matrices. 
 
This paper relates Cantini and Sportiello's first proof~\cite{razstrogpf} of the Razumov-Stroganov correspondence to both the \emph{toggle group} 
and the \emph{homomesy phenomenon}, which we describe in the following paragraphs. (Cantini and Sportiello also published a one-parameter refinement~\cite{razstrogpf2} of the Razumov-Stroganov correspondence in 2014, in which they also proved some additional generalizations; the present paper addresses only~\cite{razstrogpf}, but the viewpoint described here could be a fruitful perspective on~\cite{razstrogpf2} as well.) Since this paper ties together several disparate topics, we aim to explain these concepts and connections in a manner accessible to readers in a variety of fields.

The \emph{toggle group} is a subgroup of the symmetric group generated by certain involutions, called \emph{toggles}, which acts on \emph{order ideals} of a partially ordered set, or \emph{poset}. The toggle group was defined in a 1995 paper by P.\ Cameron and D.~Fon-der-Flaass~\cite{fonderflaass} and further studied in a 2012 paper by the author with N.~Williams~\cite{prorow}. In that paper, N. Williams and the author investigated toggle group actions on many combinatorial objects that can be encoded as order ideals of a poset, such as subsets, partitions, Catalan objects, certain plane partitions, and alternating sign matrices. In many cases, known actions on these objects could be expressed as toggle group actions~\cite{prorow}. The authors also used conjugacy of toggle group elements to show that different combinatorially meaningful actions have the same orbit structure, in many cases giving easy proofs of the \emph{cyclic sieving phenomenon} of V.\ Reiner, D.\ Stanton, and D.\ White~\cite{CSP}. The relevant result from~\cite{prorow} for the purpose of this paper we list 
as Proposition~\ref{prop:asmgyr}, namely, that gyration on fully-packed loops can be expressed as a toggle group action. 

The \emph{homomesy phenomenon} was isolated in 2013 by J.~Propp and T.~Roby~\cite{homomesy}. A statistic on a set of combinatorial objects exhibits \emph{homomesy} (or is \emph{homomesic}) with respect to a bijective action if the average value of the statistic on any orbit of the action is equal to the global average value of the statistic. That is, the orbit-average of the statistic is independent of the chosen orbit. In~\cite{homomesy}, Propp and Roby investigated homomesy of toggle group actions in certain posets and also noted that homomesy occurs in many other contexts; this has been a growing area of research~\cite{bloom_homomesy}~\cite{EinsteinPropp} \cite{SHaddadan}~\cite{HopkinsZhang}~\cite{Rush}.

Beyond relating the above concepts of homomesy and the toggle group to the \linebreak Razumov-Stroganov correspondence, the main new contributions of this paper are two homomesy results on general  posets which we hope will have broader implications. Both results concern the \emph{toggleability} statistic $\togstat_p$, which we introduce in Definition~\ref{def:toggleability}. 

The first new result involves
\emph{rowmotion}, a ubiquitous action studied in~\cite{fonderflaass} and~\cite{prorow}. 

\begin{mainlemma}
Given any finite poset $P$ and $p\in P$, $\togstat_p$ is homomesic with average value~$0$ with respect to rowmotion.
\end{mainlemma}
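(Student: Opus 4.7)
The plan is to decompose $\togstat_p$ as a difference of two simpler $\{0,1\}$-valued indicators and then exploit a direct identity between these indicators and the action of rowmotion $\rho$. Specifically, I would write $\togstat_p(I) = a_p(I) - b_p(I)$, where $a_p(I) = 1$ precisely when $p$ is addable to $I$ (equivalently, $p \in \min(P\setminus I)$) and $b_p(I) = 1$ precisely when $p$ is removable from $I$ (equivalently, $p \in \max(I)$). Since $a_p$ and $b_p$ cannot both be nonzero on the same $I$ (one requires $p \notin I$, the other $p \in I$), this difference recovers all three cases of $\togstat_p$.

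The crucial observation is the identity
\[
\max(\rho(I)) = \min(P\setminus I),
\]
which is immediate from the standard description of rowmotion as the map sending $I$ to the order ideal generated by the antichain $\min(P\setminus I)$: the maximal elements of an order ideal generated by an antichain $A$ are exactly $A$. This identity says precisely that $b_p(\rho(I)) = a_p(I)$ for every order ideal $I$ and every $p \in P$, and substituting yields $\togstat_p(\rho(I)) = a_p(\rho(I)) - a_p(I)$.

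Summing over a rowmotion orbit $I, \rho(I), \rho^2(I), \ldots, \rho^{k-1}(I)$ (with $\rho^k(I) = I$) then produces a telescoping sum
\[
\sum_{i=1}^{k} \togstat_p(\rho^i(I)) = \sum_{i=1}^{k} \bigl(a_p(\rho^i(I)) - a_p(\rho^{i-1}(I))\bigr) = a_p(\rho^k(I)) - a_p(I) = 0,
\]
so the orbit average is $0$, as claimed.

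I do not anticipate any genuine obstacle here: once the decomposition $\togstat_p = a_p - b_p$ is in hand, the identity $b_p \circ \rho = a_p$ is read off from the definition of rowmotion and the homomesy drops out of a one-line telescoping computation. The real conceptual content is simply recognizing that $\togstat_p$ behaves, under rowmotion, as a discrete ``time derivative'' of the simpler statistic $a_p$; the cancellation is then forced by the cyclic nature of the orbit.
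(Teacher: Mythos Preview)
Your proof is correct and rests on the same key identity as the paper's: $p\in(P\setminus I)_{\min}$ if and only if $p\in(\row(I))_{\max}$, i.e.\ your $b_p(\row(I))=a_p(I)$. The paper packages the conclusion as ``the nonzero values of $\togstat_p$ alternate in sign along any orbit'' via a short transition analysis, whereas you recast $\togstat_p\circ\row$ as the discrete derivative $a_p\circ\row - a_p$ and telescope; these are two presentations of the same cancellation.
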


For the other new result, in Definition~\ref{def:gyr} we define a toggle group action $\gyr$ on the order ideals of any finite ranked poset. 

\begin{maintheorem}
Given any finite ranked poset $P$ and $p\in P$, 
the toggleability statistic $\togstat_p$ 
is homomesic with average value $0$ with respect to the toggle group action $\gyr$.
\end{maintheorem}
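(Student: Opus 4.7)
The plan is to find, for each $p$, a ``potential function'' on order ideals whose discrete derivative along $\gyr^{\pm 1}$ equals $\togstat_p$; summing around any finite orbit then telescopes to $0$, yielding homomesy with orbit-average $0$. By Definition~6.3, $\gyr$ factors as $\gyr = A_e \circ A_o$, where $A_e$ (respectively $A_o$) is the composite of the toggles at all even-rank (resp.\ odd-rank) elements of $P$. Since elements sharing a rank are pairwise incomparable, the toggles inside each factor commute, so $A_e$ and $A_o$ are each involutions and $\gyr^{-1} = A_o \circ A_e$.

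The local identity I will use is that, for the membership indicator $f_p(I) := \mathbf{1}[p \in I]$, a single toggle at $p$ changes $f_p$ by exactly $\togstat_p(I)$ (this is essentially the definition of the toggleability statistic restated), while any toggle at $q \ne p$ leaves $f_p$ unchanged. When $p$ has odd rank, $A_e$ does not toggle $p$ and therefore preserves $f_p$, whereas $A_o$ contains the toggle at $p$. Expanding
\[
f_p(\gyr(I)) - f_p(I) = \bigl(f_p(A_e A_o(I)) - f_p(A_o(I))\bigr) + \bigl(f_p(A_o(I)) - f_p(I)\bigr)
\]
therefore yields $0 + \togstat_p(I) = \togstat_p(I)$. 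When $p$ has even rank, applying the same reasoning to $\gyr^{-1} = A_o \circ A_e$ gives $f_p(\gyr^{-1}(I)) - f_p(I) = \togstat_p(I)$.

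Summing either identity around any finite orbit $\mathcal{O}$ of $\gyr$ collapses the left-hand side telescopically to $0$. Since the orbits of $\gyr$ and of $\gyr^{-1}$ coincide as sets, this gives $\sum_{I \in \mathcal{O}} \togstat_p(I) = 0$ for every orbit, hence orbit-average $0$, as required.

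The only delicate point is the parity matching: one must ensure that the factor of $\gyr^{\pm 1}$ actually containing the toggle of $p$ is the one applied \emph{first}, so that the outer, non-$p$-touching factor contributes nothing to the change in $f_p$. This bookkeeping is essentially the entire content of the argument, and I do not foresee any serious obstacle beyond keeping the two rank parities straight.
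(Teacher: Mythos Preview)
Your argument is correct and is a genuinely different route from the paper's. The paper proceeds by a four-case analysis, tracking how $(\togstat_p(I),\text{membership of }p)$ transitions under one step of $\gyr$, and concludes that the nonzero values of $\togstat_p$ along any orbit alternate in sign, hence sum to zero. Your approach instead exhibits $\togstat_p$ as a discrete derivative of the indicator $f_p(I)=\mathbf{1}[p\in I]$ along $\gyr$ or $\gyr^{-1}$ (depending on the rank parity of $p$), and telescopes. This is cleaner and more conceptual for the bare $0$-mesy statement; the paper's case analysis, on the other hand, proves the finer structural fact that the $\pm 1$ values actually \emph{alternate} (with possible runs of zeros only between a $-1$ and the next $+1$), which is strictly more information and is exactly what their downstream application to Cantini--Sportiello's Lemma~4.1 exploits.

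One minor bookkeeping point: by Definition~6.3, $\gyr$ toggles even ranks \emph{first}, then odd, so as a function composition $\gyr = A_o \circ A_e$, not $A_e \circ A_o$. With the paper's convention, it is the even-rank case that uses $\gyr$ directly and the odd-rank case that uses $\gyr^{-1}$; your two cases are simply swapped. This does not affect the validity of the argument in any way. It may also be worth making explicit the step that $f_p(A_o(I))-f_p(I)=\togstat_p(I)$: since all odd-rank toggles commute, write $A_o = B\circ t_p$ with $B$ the product of the remaining odd-rank toggles; then $f_p(A_o(I))=f_p(B(t_p(I)))=f_p(t_p(I))$ because $B$ never adds or removes $p$, and $f_p(t_p(I))-f_p(I)=\togstat_p(I)$ by definition.
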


In \cite{prorow}, N. Williams and the author proved that $\gyr$ on the alternating sign matrix poset reduces to Wieland's gyration on fully-packed loops. Thus, when specialized to the case of the alternating sign matrix poset, Theorem~\ref{thm:genposet} yields Lemma 4.1 from Cantini and Sportiello's proof of the Razumov-Stroganov conjecture~\cite{razstrogpf}. We list this result as Corollary~\ref{cor:asmcor}. We hope that interpreting problems of this sort in terms of posets and homomesy may provide a more general setting within which to investigate more correspondences of Razumov-Stroganov type which are, as yet, unproved.

The paper is organized as follows. Section~\ref{sec:toggle} gives background on posets, the toggle group, and rowmotion. Section~\ref{sec:asmfplgyr} gives definitions of and poset interpretations for alternating sign matrices, fully-packed loops, and gyration. Section~\ref{sec:rs} states and explains the Razumov-Stroganov correspondence. Section~\ref{sec:homomesy} gives background on homomesy. 
Section~\ref{sec:results} defines the toggle group action $\gyr$, proves two homomesy results (Lemma~\ref{lem:rowhomomesy} and Theorem~\ref{thm:genposet}), and applies Theorem~\ref{thm:genposet} to Cantini and Sportiello's proof of the Razumov-Stroganov conjecture. 

\section{Posets, the toggle group, and rowmotion}
\label{sec:toggle}
In this section, we first give the relevant standard definitions on posets and distributive lattices, following Chapter 3 of~\cite{Stanley_1}. We then give definitions related to the more recent notions of rowmotion and the toggle group.

A \emph{partially ordered set} or \emph{poset} $P$ is a set with a partial order ``$\leq$'' that is reflexive, antisymmetric, and transitive. Given $p,q\in P$, $p$ \emph{covers} $q$ if $q\leq p$, $q\neq p$, and if $q\leq r\leq p$ then $r=q$ or $r=p$. The \emph{Hasse diagram} of $P$ is the graph in which the elements of $P$ are vertices and the edges are the covering relations, drawn such that the larger element is ``above'' the smaller in each cover.  The \emph{dual poset} of $P$ is the poset given by reversing the partial order. An \emph{order ideal} of $P$ is a subset $I\subseteq P$ such that if $p\in I$ and $q\leq p$ then $q\in I$. Denote as $J(P)$ the set of order ideals of $P$. We draw an order ideal on the Hasse diagram such that each element in the order ideal is represented by a filled-in circle ``$\bullet$'' and all other elements are represented by an open circle ``$\circ$''.

A poset $P$ is a \emph{lattice} if for any $p,q\in P$, there is a well-defined \emph{meet} (greatest lower bound), denoted $p\wedge q$, and a well-defined \emph{join} (least upper bound), denoted $p\vee q$. A lattice is \emph{distributive} if the meet and join operations distribute over each other. It is easy to see that the set of order ideals $J(P)$ has the structure of a distributive lattice with meet and join given by set union and intersection. The \emph{fundamental theorem of finite distributive lattices} says that the converse also holds: every finite distributive lattice $L$ is isomorphic to the lattice of order ideals $J(P)$ for some poset $P$. Given a finite distributive lattice $L$, a poset $P$ such that $L\cong J(P)$ is composed of the \emph{join irreducible} elements of $L$, that is, the elements $x\in L$ such that if $x=p\vee q$, then $x=p$ or $x=q$. The partial order on $P$ is given by the induced order from $L$. See Sections 3.3 and 3.4 of~\cite{Stanley_1}.

An \emph{antichain} of $P$ is a subset $A\subseteq P$ of mutually incomparable elements. 
Given any finite poset, there are two natural bijections from order ideals to antichains. Given a subset $X\subseteq P$, let $X_{\max}$ denote the maximal elements of $X$ and $X_{\min}$ denote the minimal elements of $X$. Given an order ideal $I\in J(P)$, one bijection to antichains is $I\to I_{\max}$ and another is $I\to (P\setminus I)_{\min}$. Given an antichain $A$, the order ideal $I$ such that $A=I_{\max}$ is the order ideal \emph{generated} by $A$. See Section 3.1 of~\cite{Stanley_1}.

We use the bijections of the previous paragraph between order ideals and antichains to define an interesting action, studied under various names by many authors. See~\cite{prorow} for a detailed history.
\begin{definition}
\label{def:row}
\emph{Rowmotion} is a bijective action on order ideals defined as the following composition. Given $I\in J(P)$, let $\row(I)$ be the order ideal generated by the antichain $(P\setminus I)_{\min}$.
\end{definition}

We discuss in Theorem~\ref{thm:linextrow} below a different characterization of rowmotion using the toggle group. The toggle group characterization is often easier to work with than Definition~\ref{def:row}, but in Lemma~\ref{lem:rowhomomesy}, we prove a new homomesy result for rowmotion which follows directly from Definition~\ref{def:row}.

In~\cite{fonderflaass}, P.\ Cameron and D.\ Fon-der-Flaass defined a group acting on $J(P)$, dubbed the \emph{toggle group} in~\cite{prorow}.

\begin{definition}
For each $p \in P$, define $t_p: J(P) \to J(P)$ to act by \emph{toggling} $p$ if possible.  That is, if $I \in J(P)$,
$$t_p (I) = \left\{
	\begin{array}{ll}
		I \cup \{p\} & \text{ if } p\in (P\setminus I)_{\min},\\
		I\setminus \{p\} & \text{ if } p \in I_{\max},\\
		I & \text{ otherwise}.\\
	\end{array} \right.
$$
\end{definition}

\begin{definition}[\cite{fonderflaass}]
	The \emph{toggle group} $T(P)$ of a poset $P$ is the subgroup of the symmetric group $\mathfrak{S}_{J(P)}$ on all order ideals generated by the toggles $\{ t_p\}_{p\in P}$.
\end{definition}

Note that toggles have the following commutation condition.
\begin{lemma}[\cite{fonderflaass}]
\label{lem:commute}
$t_p$ and  $t_q$ commute if and only if there is no covering relation between $p$ and $q$.
\end{lemma}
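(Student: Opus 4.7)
The plan is to prove both directions directly, using as the central observation that whether $p$ is toggleable at an order ideal $I$---and, if so, in which direction---is determined entirely by the membership in $I$ of $p$ itself and of the elements in a covering relation with $p$ (both those that $p$ covers and those that cover $p$). Since $t_q$ modifies $I$ only at the single element $q$, toggling $q$ can affect the toggleability of $p$ only when $q=p$, $q$ covers $p$, or $p$ covers $q$.

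For the ``if'' direction (no covering relation implies commutativity): the case $p=q$ is trivial since $t_p$ is an involution. Otherwise, by hypothesis neither element covers the other, so toggling $q$ leaves the status of $p$ (toggleable in, toggleable out, or neither) unchanged, and symmetrically for $p$ versus $q$. Both $t_p t_q$ and $t_q t_p$ therefore perform the same independent edits at $p$ and at $q$ in either order, yielding the same order ideal.

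For the ``only if'' direction I would prove the contrapositive: assume without loss of generality that $p$ covers $q$, and exhibit an order ideal on which the two compositions disagree. The natural candidate is
\[
I = \{r \in P : r < p\} \setminus \{q\},
\]
which is an order ideal precisely because $p$ covers $q$: any $s \le r$ with $r \in I$ satisfies $s < p$, and if we had $s = q$ then $q < r < p$ would contradict the cover. On this $I$, one checks that $q \in (P \setminus I)_{\min}$, while $p$ is blocked from being toggled in because the covered element $q$ is missing. Hence $t_q t_p(I) = I \cup \{q\}$, whereas after first toggling $q$ in, $p$ itself becomes toggleable in, giving $t_p t_q(I) = I \cup \{p, q\}$.

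The only step demanding care is verifying that the chosen $I$ really is an order ideal, which is exactly where the covering hypothesis is used; the remaining checks are routine applications of the definition of $t_p$. I do not anticipate a substantive obstacle---the argument is essentially a local computation in the Hasse diagram---but one must be precise about which neighbors of $p$ actually influence its toggleability.
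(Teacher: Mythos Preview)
The paper does not actually supply a proof of this lemma; it is simply quoted from Cameron and Fon-der-Flaass~\cite{fonderflaass}. So there is no in-paper argument to compare against.

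Your proof is correct. The ``if'' direction rests on the right observation: the action of $t_p$ on an order ideal $I$ is determined by the membership in $I$ of $p$ together with the elements immediately above and below $p$ in the Hasse diagram, so altering $I$ at some $q$ outside that neighborhood cannot change what $t_p$ does. (Your aside that the case $p=q$ is trivial ``since $t_p$ is an involution'' is slightly off---any map commutes with itself---but this is cosmetic.) For the ``only if'' direction, your witness $I=\{r\in P:r<p\}\setminus\{q\}$ is well chosen: the verification that $I$ is an order ideal is exactly where the cover hypothesis enters, and the computations $t_q t_p(I)=I\cup\{q\}$ versus $t_p t_q(I)=I\cup\{p,q\}$ go through as you describe. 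This is the standard elementary argument one would expect for this fact.
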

 
Cameron and Fon-der-Flaass characterized rowmotion as an element of $T(P)$.

\begin{theorem}[\cite{fonderflaass}]
\label{thm:linextrow}
Given any finite poset $P$, $\row$ acting on $J(P)$ is equivalent to the toggle group element given by toggling the elements of $P$ in the reverse order of any linear extension (that is, from top to bottom).
\end{theorem}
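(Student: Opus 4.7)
The plan is to argue in two stages: first that the toggle element does not depend on the choice of linear extension, and second that it coincides with $\row$. For the first stage, I would use that any two linear extensions of $P$ are connected by a sequence of adjacent transpositions in which the swapped pair is incomparable, and incomparable elements cannot have a covering relation between them, so Lemma~\ref{lem:commute} shows the corresponding adjacent toggles commute. This makes the composition well-defined; call it $\sigma$.

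For the second stage, fix a linear extension $p_1,\ldots,p_n$ and $I\in J(P)$, and let $J_k$ denote the state after toggling $p_n,p_{n-1},\ldots,p_{n-k+1}$, so $J_0=I$. I would prove by induction on $k$ the following hybrid invariant: for indices $j>n-k$, $p_j\in J_k$ iff $p_j\in\row(I)$; for $j\le n-k$, $p_j\in J_k$ iff $p_j\in I$. In words, the upper part of the state already equals rowmotion's output while the lower part is untouched. The base $k=0$ is trivial. For the inductive step with $p=p_{n-k}$, the linear-extension property forces every element strictly below $p$ to have index $<n-k$ and every element strictly above $p$ to have index $>n-k$, so the hypothesis controls both halves of $J_k$; combined with $J_k$ being an order ideal, the toggleability of $p$ reduces to conditions on elements directly comparable to $p$.

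The inductive step then splits into cases by whether $p\in I$. If $p\in I$, the toggle removes $p$ exactly when no element above $p$ lies in $J_k$, equivalently (by the invariant) when no element above $p$ lies in $\row(I)$; and given $p\in I$, this is precisely the criterion for $p\notin\row(I)$, since any $r\in (P\setminus I)_{\min}$ with $p\leq r$ would have to satisfy $r>p$ and therefore lie in $\row(I)$. If $p\notin I$, the toggle adds $p$ exactly when every element covered by $p$ lies in $J_k$, equivalently when every element covered by $p$ lies in $I$, i.e., $p\in (P\setminus I)_{\min}$; and given $p\notin I$, this is the criterion for $p\in\row(I)$, since any $r\geq p$ in $(P\setminus I)_{\min}$ must equal $p$ itself (an $r>p$ would force $p\in I$, since every element below $r$ is in $I$). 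In each case the state at $p$ after the toggle matches $\row(I)$-membership, the invariant carries through, and at $k=n$ we obtain $\sigma(I)=\row(I)$.

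The main obstacle I expect is the bookkeeping in the inductive step: one must carefully confirm that the toggle at $p$ only sees elements whose $J_k$-status the invariant has already determined (lower ones by the $I$-clause, upper ones by the $\row(I)$-clause), using both the compatibility of the linear extension with the partial order and the fact that toggles preserve the order-ideal property. Once the invariant is correctly formulated, the four-way case analysis itself is mechanical.
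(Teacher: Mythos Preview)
Your argument is correct. The paper does not actually prove Theorem~\ref{thm:linextrow}; it simply quotes the result from Cameron and Fon-der-Flaass~\cite{fonderflaass}, so there is no in-paper proof to compare against. Your two-stage plan is the standard route: reducing well-definedness to Lemma~\ref{lem:commute} via adjacent transpositions of incomparable elements, and then verifying agreement with $\row$ by an inductive ``hybrid'' invariant tracking which coordinates have already been updated. The case analysis in the inductive step is sound; in particular, your use of the order-ideal property of both $J_k$ and $\row(I)$ to pass between ``covering'' and ``strictly above'' is exactly what is needed, and your handling of the two directions in each case (via the characterization of $\row(I)$ as the order ideal generated by $(P\setminus I)_{\min}$) is clean.
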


In~\cite{prorow}, N.\ Williams and the author used the toggle group to show that rowmotion has the same orbit structure as other combinatorially meaningful actions, 
in many cases giving easy proofs of the \emph{cyclic sieving phenomenon} of V.\ Reiner, D.\ Stanton, and D.\ White~\cite{CSP}. The main tool the authors used to prove these results was the conjugacy of toggle group elements proved in the following theorem, since conjugate permutations have the same orbit structure.

\begin{theorem}[\cite{prorow}]
Given a finite ranked poset $P$ with a suitable definition of columns, rowmotion (toggle top to bottom by row) and promotion (toggle left to right by column) are conjugate elements in the toggle group.
\end{theorem}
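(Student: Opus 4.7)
The plan is to realize both rowmotion and promotion as words in which each toggle $t_p$ appears exactly once, and then to convert one word into the other using only two move types: (i) swapping two adjacent commuting toggles, which leaves the group element unchanged by Lemma~\ref{lem:commute}, and (ii) a cyclic rotation $ab\mapsto ba$, which replaces the element by a conjugate via the identity $a^{-1}(ab)a=ba$. By Theorem~\ref{thm:linextrow}, $\row$ is the product of toggles taken in the reverse order of any linear extension. Grouping toggles by rank, the product $R_i$ of the rank-$i$ toggles is well defined because rank-$i$ elements are pairwise incomparable, so one may write $\row = R_0 R_1 \cdots R_r$ (rightmost applied first, so the topmost row $R_r$ is toggled first). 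The ``suitable definition of columns'' should be chosen so that each column is a chain with a canonical traversal direction, producing an analogous column-wise factorization $\mathrm{pro}=C_c\cdots C_1C_0$.

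The core of the argument is an interpolation between the row-ordered and column-ordered words. I would proceed by induction on the number of rows: peel the top row $R_r$ off $\row$'s word, use commutations to slide its individual toggles past incomparable intervening toggles so that each lands in the position it occupies as an entry of some column of $\mathrm{pro}$, and use cyclic rotations to bring the resulting layer to its correct place in the product. Induction applied to the sub-poset $P\setminus\{\text{top row}\}$ then produces the rest of the $\mathrm{pro}$ word, and the required conjugating element is assembled from the product of the cyclic shifts used at each stage.

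The main obstacle is verifying that every required rearrangement can actually be accomplished using only moves (i) and (ii): comparable toggles cannot be commuted, so whenever two comparable toggles need to switch their relative order, the switch must be realized by a cyclic shift of a larger block, and one must check that such a shift does not disturb toggles already placed. The ``suitable definition of columns'' must be precisely the condition that permits this block-level rewriting; concretely, it should force each column to be a chain whose toggles occur as a consecutive block in some linear extension of $P$, so that at each inductive step a column $C_j$ can be isolated at one end of a conjugate word and peeled off as a unit. I expect this bookkeeping to constitute the bulk of the technical work, with Lemma~\ref{lem:commute} providing the fundamental combinatorial control throughout.
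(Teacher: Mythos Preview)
This theorem is not proved in the present paper; it is stated as a cited result from \cite{prorow} and included only as background in Section~\ref{sec:toggle}. There is therefore no proof here to compare your proposal against.

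That said, your two basic moves --- commuting toggles with no cover relation between them (Lemma~\ref{lem:commute}) and cyclically rotating the toggle word (which effects a conjugation) --- are exactly the tools used in \cite{prorow}, so your overall strategy is the right one. The specific row-peeling induction you sketch, however, differs from the argument there and runs into the bookkeeping difficulty you yourself flag: a cyclic shift moves a block past \emph{all} remaining toggles, and guaranteeing that this does not disturb column blocks already assembled is delicate. In \cite{prorow} the phrase ``suitable definition of columns'' is made precise via the notion of an \emph{rc-poset} (a ranked poset embedded in a planar grid so that covering relations occur only between adjacent columns as well as adjacent rows), and the proof avoids your inductive assembly by instead exhibiting an explicit finite sequence of intermediate toggle orders interpolating between the row word and the column word, with each consecutive pair differing either by commutations alone or by a single cyclic shift of one row block. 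This global interpolation keeps the verification at each step local and routine, whereas your row-by-row construction would require maintaining a more intricate invariant across stages.
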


In the next section, we recall the definitions of alternating sign matrices, fully-packed loops, and gyration, and review their interpretation from the perspective of posets.

\section{A poset perspective on alternating sign matrices, fully-packed loops, and gyration}

In this section, we define alternating sign matrices and other related concepts, paying special attention to their poset interpretations.

\label{sec:asmfplgyr}
\begin{definition}
Alternating sign matrices (ASMs) are square matrices with the following properties:
\begin{itemize}
\item
entries $\in \{0,1,-1\}$,
\item
the entries in each row/column sum to 1, and
\item
the nonzero entries in each row/column alternate in sign.
\end{itemize}
The ASMs with no $-1$'s are the permutation matrices.
\end{definition}

\begin{figure}[h]
\begin{center}
\scalebox{.85}{
$\left( 
\begin{array}{rrr}
1 & 0 & 0 \\
0 & 1 & 0\\
0 & 0 & 1
\end{array} \right)
\left( 
\begin{array}{rrr}
1 & 0 & 0 \\
0 & 0 & 1\\
0 & 1 & 0
\end{array} \right)
\left( 
\begin{array}{rrr}
0 & 1 & 0 \\
1 & 0 & 0\\
0 & 0 & 1
\end{array} \right)
\left( 
\begin{array}{rrr}
0 & 1 & 0 \\
1 & -1 & 1\\
0 & 1 & 0
\end{array} \right)
\left( 
\begin{array}{rrr}
0 & 1 & 0 \\
0 & 0 & 1\\
1 & 0 & 0
\end{array} \right)
\left( 
\begin{array}{rrr}
0 & 0 & 1 \\
1 & 0 & 0\\
0 & 1 & 0
\end{array} \right)
\left( 
\begin{array}{rrr}
0 & 0 & 1 \\
0 & 1 & 0\\
1 & 0 & 0
\end{array} \right)
$
}
\end{center}
\caption{The seven $3\times 3$ alternating sign matrices.}
\label{ex:asm3x3}
\end{figure}

We recall the poset interpretation of ASMs, first introduced in \cite{ELKP_DOMINO1}  and further studied in~\cite{TREILLIS} and \cite{StrikerPoset}. This partial order can be naturally and equivalently defined using any of the following objects in bijection with ASMs: monotone triangles, corner sum matrices, or height functions. Here we give the definition using height functions.
(See~\cite{ProppManyFaces} for further details on the bijections between these objects.)

\begin{definition}
\label{def:heightfcn}
	A \emph{height function matrix} of order $n$ is an $(n+1)\times (n+1)$ matrix $(h_{i,j})_{0\leq i,j \leq n}$ with $h_{0,k} = h_{k,0} = k$ and $h_{n,k} = h_{k,n} = n-k$ for $0 \leq k \leq n$, and such that adjacent entries in any row or column differ by $1$.
\end{definition}

A bijection between $n\times n$ ASMs and height function matrices of order $n$ is given
by mapping an ASM $(a_{i,j})_{1\leq i,j\leq n}$ to the height function matrix 
\[\left(h_{i,j}\right)_{0\leq i,j\leq n}=\left( i + j - 2 \sum_{i'=1}^{i} \sum_{j'=1}^{j} a_{i',j'}\right)_{0\leq i,j\leq n}.\]

Define a partial ordering on $n\times n$ ASMs by componentwise comparison of the corresponding height function matrices. This poset is a self-dual distributive lattice which is the \emph{MacNeille completion} of the \emph{Bruhat order} on the symmetric group; this was proved in~\cite{TREILLIS}. (The MacNeille completion of a poset is a construction of the smallest lattice containing the poset as an induced subposet; see~\cite{trotter}. The Bruhat order is a well-studied partial order on the symmetric group which has a lovely generalization to any \emph{Coxeter group}; see~\cite{bjorner_brenti}.) 

We denote the poset of join irreducibles of this distributive lattice as $\aposet_n$, so that $J(\aposet_n)$ is in bijection with the set of $n \times n$ ASMs. In Definition~\ref{def:aposetex} we give an explicit construction of $\aposet_n$, and in Proposition~\ref{prop:oihf} we give an explicit bijection, which will be of use later, from the order ideals $J(\aposet_n)$ to height function matrices of order $n$.  
See~\cite{StrikerPoset} for further discussion of $\aposet_n$ and its broader family of \emph{tetrahedral posets}. Figure~\ref{ex:corresp} gives an ASM and the corresponding monotone triangle, corner sum matrix, height function matrix, and order ideal in $J(\aposet_4)$. Figure~\ref{ex:asm3} shows all the order ideals of $\aposet_3$. 

\begin{figure}[hbt]
\begin{center}
\scalebox{0.8}{
$\left(
\begin{array}{rrrr}
 0 & 0 & 1 & 0 \\
 1 & 0 & -1 & 1 \\
 0 & 0 & 1 & 0 \\
 0 & 1 & 0 & 0
\end{array}
\right) 
\begin{array}{ccccccc}
 & & & 3 & & & \\
 & & 1 & & 4 & & \\
 & 1 & & 3 & & 4 & \\
 1 & & 2 & & 3 & & 4
\end{array}
\left(
\begin{array}{cccc}
 0 & 0 & 1 & 1 \\
 1 & 1 & 1 & 2 \\
 1 & 1 & 2 & 3 \\
 1 & 2 & 3 & 4
\end{array}
\right) \left(
\begin{array}{ccccc}
 0 & 1 & 2 & 3 & 4\\
 1 & 2 & 3 & 2 & 3 \\
 2 & 1 & 2 & 3 & 2 \\
 3 & 2 & 3 & 2 & 1 \\
 4 & 3 & 2 & 1 & 0
\end{array}
\right) 
$
}
\scalebox{0.65}{
$\vcenter{\xymatrix @-1.2pc {
& & \circ & & & \circ & &  \circ & & \\
& \bullet \ar@{-}[ur] \ar@{-}[urrrr] & & \circ \ar@{-}[ul] \ar@{-}[urr]  & \ar@{-}[ur] \ar@{-}[urrr] \circ & & \ar@{-}[ul] \ar@{-}[ur] \circ & & \\
\bullet \ar@{-}[ur] \ar@{-}[urrrr] & & \bullet \ar@{-}[ur] \ar@{-}[ul] \ar@{-}[urr] \ar@{-}[urrrr] & & \bullet \ar@{-}[ul] \ar@{-}[urr] }}$}
\end{center}
\caption{A $4\times 4$ ASM and the corresponding monotone triangle, corner sum matrix, height function matrix, and order ideal in $J(\aposet_4)$.} 
\label{ex:corresp}
\end{figure}

\begin{figure}[hbt]
\begin{center} $\left \{ \begin{gathered}
\scalebox{0.6}{
$\xymatrix @-1.2pc {
& \circ & & & \circ \\
\ar@{-}[ur] \ar@{-}[urrrr] \circ & & \ar@{-}[ul] \ar@{-}[urr] \circ & & }$
$\xymatrix @-1.2pc {
& \circ & & & \bullet \\
\ar@{-}[ur] \ar@{-}[urrrr] \bullet & & \ar@{-}[ul] \ar@{-}[urr] \bullet & & }$
$\xymatrix @-1.2pc {
& \bullet & & & \circ \\
\ar@{-}[ur] \ar@{-}[urrrr] \bullet & & \ar@{-}[ul] \ar@{-}[urr] \bullet & & }$
$\xymatrix @-1.2pc {
& \circ & & & \circ \\
\ar@{-}[ur] \ar@{-}[urrrr] \bullet & & \ar@{-}[ul] \ar@{-}[urr] \circ & & }$
$\xymatrix @-1.2pc {
& \circ & & & \circ \\
\ar@{-}[ur] \ar@{-}[urrrr] \circ & & \ar@{-}[ul] \ar@{-}[urr] \bullet & & }$
$\xymatrix @-1.2pc {
& \bullet & & & \bullet \\
\ar@{-}[ur] \ar@{-}[urrrr] \bullet & & \ar@{-}[ul] \ar@{-}[urr] \bullet & & }$
$\xymatrix @-1.2pc {
& \circ & & & \circ \\
\ar@{-}[ur] \ar@{-}[urrrr] \bullet & & \ar@{-}[ul] \ar@{-}[urr] \bullet & & }$} \end{gathered} \right \}.$
\end{center}
\caption{The seven order ideals in $J(\aposet_3)$.}
\label{ex:asm3}
\end{figure}

We give here an explicit construction of $\aposet_n$ that we will use throughout the paper; see Figure~\ref{ex:aposet}. See Definition 8.4 of~\cite{prorow} for a different, but equivalent, explicit construction of $\aposet_n$ as a layering of successively smaller \emph{Type A positive root posets}. See also the constructions in~\cite{ELKP_DOMINO1}, \cite{ProppManyFaces}, and \cite{StrikerPoset}.
\begin{definition}
\label{def:aposetex}
Define the poset elements $\aposet_n$ as the coordinates $(i,j,k)$ in $\mathbb{Z}^3$ such that $0\leq i\leq n-2$, $0\leq j\leq n-2-i$, and $0\leq k\leq n-2-i-j$. Define the partial order via the following covering relations: $(i,j,k)$ covers $(i,j+1,k)$, $(i,j+1,k-1)$, $(i+1,j,k)$, and $(i+1,j,k-1)$, whenever these coordinates are poset elements. 
\end{definition}

\begin{figure}[hbt]
\begin{center}
\includegraphics[scale=0.8]{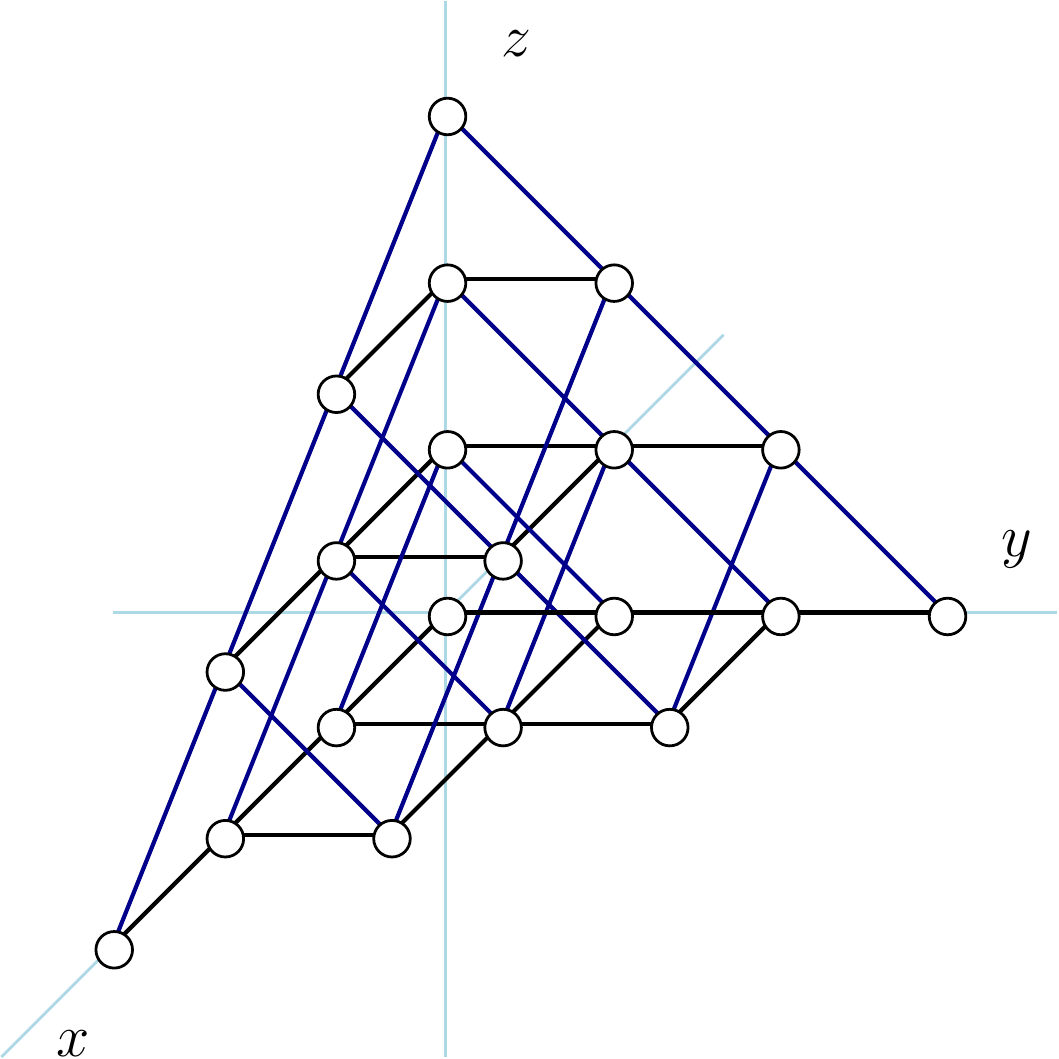}
\end{center}
\caption{The explicit construction of Definition~\ref{def:aposetex} for $\aposet_5$.}
\label{ex:aposet}
\end{figure}

\begin{proposition}
\label{prop:aposetranked}
$\aposet_n$ is a ranked poset in which the rank of $(i,j,k)$ in $\aposet_n$ equals $n-2-i-j$. The minimal elements of $\aposet_n$ are of the form $(i,n-2-i,0)$ and have rank $0$, and the maximal elements are of the form $(0,0,k)$ have rank $n-2$.
\end{proposition}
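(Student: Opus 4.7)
The plan is to exhibit an explicit candidate rank function $r:\aposet_n \to \mathbb{Z}_{\geq 0}$ by setting $r(i,j,k) := n-2-i-j$, and then verify the three assertions of the proposition by direct inspection of Definition~\ref{def:aposetex}.

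First I would check that $r$ is well-defined and non-negative: the defining inequalities $0\le i$ and $0 \le j \le n-2-i$ yield $0 \le i+j \le n-2$, so $r \in \{0,1,\ldots,n-2\}$. Next, I would verify that $r$ drops by exactly one across each covering relation. Each of the four covering relations in Definition~\ref{def:aposetex} sends $(i,j,k)$ down to an element whose first two coordinates are either $(i,j+1)$ or $(i+1,j)$; in both cases $i+j$ increases by $1$ on the covered (lower) element, so its $r$-value equals $r(i,j,k)-1$. Since $\aposet_n$ admits a function strictly decreasing by $1$ on every cover, it is ranked with rank function $r$.

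For the identification of minimal and maximal elements, I would argue from the formula for $r$ together with the box constraints on $(i,j,k)$. The rank-$0$ layer consists of triples with $i+j = n-2$, which forces $j = n-2-i$; the constraint $0 \le k \le n-2-i-j = 0$ then pins $k = 0$, giving exactly the elements $(i, n-2-i, 0)$. To confirm these are genuinely minimal, I would check that each of the four candidate covers beneath $(i,\,n-2-i,\,0)$ produces a triple violating either $j' \le n-2-i'$ or $k' \ge 0$ and so fails to lie in $\aposet_n$. The maximal case is dual and simpler: $r$ attains its maximum $n-2$ precisely when $i = j = 0$, so the rank-$(n-2)$ layer is $\{(0,0,k) : 0 \le k \le n-2\}$, and any element covering one of these would need first two coordinates with a negative entry.

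I do not expect a genuine obstacle: the proposition is essentially built into Definition~\ref{def:aposetex}, and the proof reduces to bookkeeping on the four covering-relation types together with the box inequalities $0\le i \le n-2$, $0\le j\le n-2-i$, $0\le k \le n-2-i-j$. The only step requiring even a moment's care is the verification that the extreme-rank triples truly have no covers in the disallowed direction, and that is settled by a single inequality check for each of the four covering types.
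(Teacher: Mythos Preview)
Your proposal is correct. The paper actually states this proposition without proof, treating it as immediate from Definition~\ref{def:aposetex}; your direct verification---defining $r(i,j,k)=n-2-i-j$, checking it drops by exactly one across each of the four cover types, and then reading off the extremal layers from the box constraints---is precisely the intended routine check.

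One small point worth making explicit when you write it up: you verify that every element with $r=0$ is minimal (and every element with $r=n-2$ is maximal), but the proposition asserts equality, so you should also note the converse, namely that any element with $r>0$ covers something in $\aposet_n$. This is immediate: if $i+j<n-2$, then either $(i,j{+}1,k)$ or, when $k=n-2-i-j$, the element $(i,j{+}1,k{-}1)$ lies in $\aposet_n$ and is covered by $(i,j,k)$. The dual check for non-maximality of elements with $r<n-2$ is just as easy. This also confirms that $r$ really is the rank function (minimal elements sit at rank~$0$), not merely a function compatible with the covers.
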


We give here a concrete bijection from order ideals $J(\aposet_n)$ to height function matrices; see Figure~\ref{ex:aposetgyr}. See also~\cite{ProppManyFaces} and \cite{prorow}.

\begin{proposition}
\label{prop:oihf}
There is an explicit bijection between the order ideals $J(\aposet_n)$ and height function matrices of order $n$.
\end{proposition}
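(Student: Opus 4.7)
The plan is to make explicit a composition of two bijections. The fundamental theorem of finite distributive lattices (recalled in Section~\ref{sec:toggle}) gives a canonical bijection between $J(\aposet_n)$ and the ASM lattice, and the excerpt already records an explicit bijection between $n\times n$ ASMs and height function matrices of order $n$ via the formula
\[
 h_{i,j} \;=\; i + j - 2\sum_{i'=1}^{i}\sum_{j'=1}^{j} a_{i',j'}.
\]
Composing these yields a bijection $\phi:J(\aposet_n)\to\{\text{height function matrices of order }n\}$; the task is to rewrite this composition directly in terms of the order ideal.

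First I would identify, for each join irreducible element $(i,j,k)\in\aposet_n$, the image under $\phi$ of its principal order ideal. Each such image should be a height function matrix $H^{(i,j,k)}$ that differs from the minimal height function (corresponding to the antidiagonal permutation) in a small, controlled region whose position is determined by the three coordinates $(i,j,k)$: two planar coordinates $(i,j)$ locating the perturbation, and the third coordinate $k$ describing its depth. I would then verify that the covering relations of Definition~\ref{def:aposetex} match precisely the covering relations between the corresponding atomic ASMs in the ASM lattice, which pins down the identification of $\aposet_n$ with the poset of join irreducibles.

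Next, using the fact that joins in $J(\aposet_n)$ correspond, on the ASM side, to entrywise maxima of height function matrices (which follows from the lattice being a sublattice of the componentwise partial order), I would give a closed-form expression of shape
\[
 h_{a,b}(\phi(I)) \;=\; h^{\min}_{a,b} \;+\; 2\cdot\bigl|\{(i,j,k)\in I : C(i,j,k;\,a,b)\}\bigr|,
\]
where $C$ is a combinatorial condition recording whether the atom $H^{(i,j,k)}$ contributes to position $(a,b)$. Validity of $\phi(I)$ (correct boundary values and $\pm 1$ adjacency between neighbors) would be checked by induction: add one element of the order ideal at a time, consistent with the partial order, and verify the local effect on adjacent entries. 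The inverse map would send a height function matrix $H$ to the set $\{(i,j,k):H\ge H^{(i,j,k)}\text{ componentwise}\}$, which must be shown to be downward-closed in $\aposet_n$ and to invert $\phi$.

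The main obstacle will be pinning down the condition $C$ and verifying adjacency. Toggling the ideal by $\pm(i,j,k)$ must alter exactly one interior height function entry by $\pm 2$, and the order ideal constraints must exactly match the global $\pm 1$ adjacency condition on the resulting matrix. This compatibility is, in essence, the statement that $\aposet_n$ is the poset of join irreducibles of the ASM lattice, so once $C$ and the atomic height functions $H^{(i,j,k)}$ are correctly identified, the remaining work is combinatorial bookkeeping that can be organized by rank in $\aposet_n$.
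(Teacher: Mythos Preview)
Your approach is different from the paper's and, as written, leaves open precisely the step that constitutes the content of the proposition.

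The paper proceeds directly: for each interior position $(i,j)$ it singles out the chain
\[
S_{i,j}=\{(i-1-t,\,j-1-t,\,t)\in\aposet_n\}
\]
and sets $h_{i,j}=\min(i+j,\,2n-i-j)-2\,|I\cap S_{i,j}|$. This formula \emph{is} your unknown condition $C$: an element $(i',j',k')$ contributes to entry $(a,b)$ exactly when $(i',j',k')\in S_{a,b}$, i.e.\ when $i'-j'=a-b$ and $i'+k'=a-1$. The paper then records two structural facts that are used heavily downstream (Propositions~\ref{prop:togcorresp} and Corollary~\ref{cor:asmcor}): each $S_{i,j}$ is a chain whose elements all lie in ranks of a single parity, and toggling a single element of $S_{i,j}$ changes $h_{i,j}$ by $\pm 2$. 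Your outline, by contrast, treats $C$ as something still to be discovered; until you write it down, you have not given an \emph{explicit} bijection, only an argument that one exists by composing known bijections.

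There is also an orientation slip in your plan. From $h_{i,j}=i+j-2\sum a_{i',j'}$ one computes that the antidiagonal permutation gives $h_{i,j}=\min(i+j,\,2n-i-j)$, which is the \emph{maximal} height function, not the minimal one; correspondingly, adding elements to the order ideal \emph{decreases} height entries, so joins in $J(\aposet_n)$ correspond to entrywise \emph{minima} of height functions, not maxima. Your formula should read $h_{a,b}=h^{\max}_{a,b}-2\,|\{\cdots\}|$. This is easily repaired, but it matters for getting the later toggle/increment correspondence right.
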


\begin{proof}
We start with an order ideal in $J(\aposet_n)$. For $1\leq i,j\leq n-1$, define the subset $S_{i,j}:=\{(i-1-t,j-1-t,t) \}$ for any $t$ such that $(i-1-t,j-1-t,t)$ is an element of $\aposet_n$. (That is, $S_{i,j}$ is the intersection of the elements of $\aposet_n$ from the explicit construction in Definition~\ref{def:aposetex} with the line $(x,y,z)=(i-1-t,j-1-t,t)$.) See Figure~\ref{ex:aposetgyr}.  

Height function matrix entry $h_{i,j}$ for $1\leq i,j\leq n-1$ is determined by the cardinality of the intersection between the order ideal and $S_{i,j}$. If this cardinality equals $\ell$, then $h_{i,j}=\min(i+j,2n-i-j)-2\ell$, that is, $2\ell$ less than the maximum possible value of that height function entry. Note that $S_{i,j}$ will be a chain in $\aposet_n$ with elements from ranks of only one parity. This will be important in Corollary~\ref{cor:asmcor}. Also note that toggling an element out of an order ideal in $\aposet_n$ corresponds to increasing the corresponding height function matrix entry by two, whereas toggling an element into the order ideal corresponds to decreasing a height function matrix entry by two.
\end{proof}

\begin{figure}[htb]
\begin{center}
\includegraphics[scale=0.75]{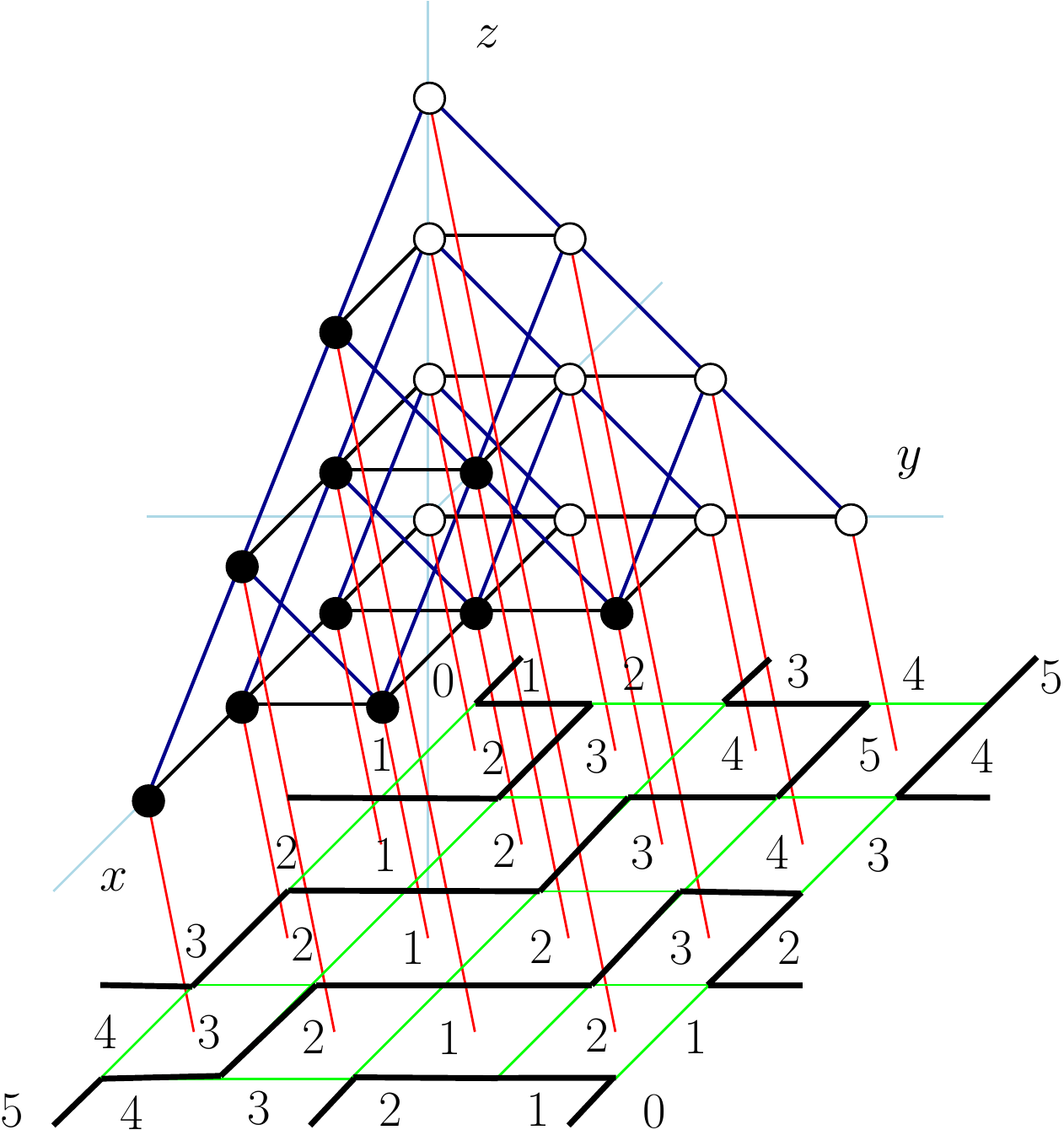}
\end{center}
\caption{An order ideal in $\aposet_5$ and its corresponding height function matrix and fully-packed loop configuration. The diagonal lines show the correspondence between the order ideal elements in the subset $S_{i,j}$ and the  height function matrix entry $h_{i,j}$.}
\label{ex:aposetgyr}
\end{figure}

We now have a bijection from alternating sign matrices through height function matrices to order ideals in a poset. Next, we define fully-packed loop configurations and relate them to height function matrices. See~\cite{ProppManyFaces} and~\cite{wieland} for further discussion.
\begin{definition} 
\label{def:fpl}
	Consider an $[n] \times [n]$ grid of dots in $\mathbb{Z}^2$. Beginning with the dot at the upper left corner, draw an edge from that dot up one unit. Then go around the grid, drawing such an external edge at every second dot (counting corner dots twice since at the corners, external edges could go in either of two directions). A \emph{fully-packed loop configuration (FPL)} of order $n$ is a set of paths and loops on the $[n] \times [n]$ grid with boundary conditions as described above, such that each of the $n^2$ vertices within the grid has exactly two incident edges.
\end{definition}

\begin{figure}[htbp]
\begin{center}
\includegraphics[scale=1]{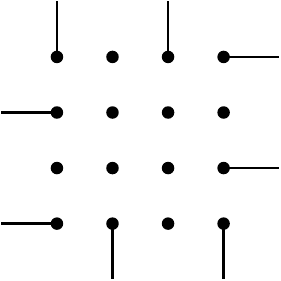}
\hspace{.75in}
\includegraphics[scale=1]{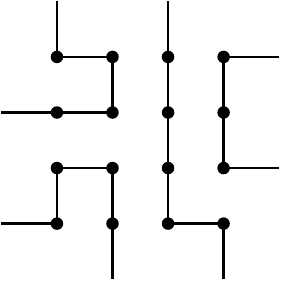}
\hspace{.75in}
\includegraphics[scale=1]{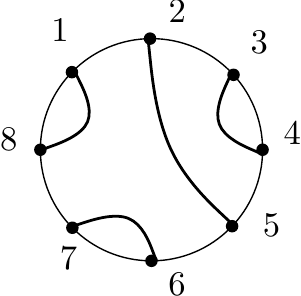}
\end{center}
\caption{Left: Fully-packed loop external edges. Center: The FPL corresponding to the ASM from Figure~\ref{ex:corresp}. Right: The link pattern corresponding to the FPL.}
\label{ex:fplbdry}
\end{figure}

Figure~\ref{ex:fplbdry}, left, gives an example of the FPL external edges, and Figure~\ref{ex:fplbdry}, center, shows the FPL in bijection with the ASM from Figure~\ref{ex:corresp}. Figure~\ref{ex:fplgyration} gives the FPLs of order $3$.

\begin{figure}[htb]
\begin{center}
\includegraphics[scale=0.85]{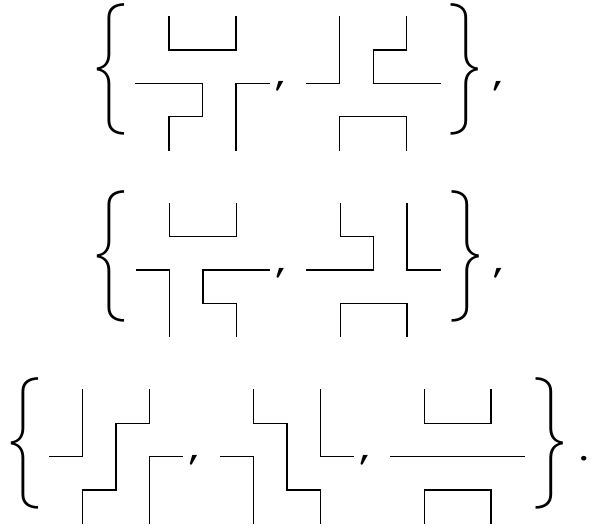}
\end{center}
\caption{The seven FPLs of order $3$.  They break into three orbits under gyration.}
\label{ex:fplgyration}
\end{figure}

We now construct a bijection between fully-packed loop configurations and height function matrices which will make more transparent the connection to toggling. As before, see also~\cite{ProppManyFaces} and \cite{prorow}.

\begin{proposition}
\label{prop:fplhf}
There is an explicit bijection between height function matrices of order $n$ and fully-packed loop configurations of order $n$.
\end{proposition}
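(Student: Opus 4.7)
The plan is to describe the bijection explicitly at the local level and then verify that the local rules patch together consistently to give a well-defined map in both directions.

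First I would describe the forward map from FPLs to height function matrices. The $(n+1) \times (n+1)$ cells of the height function matrix correspond naturally to the faces of the $[n] \times [n]$ grid, including exterior faces around the boundary. Given an FPL $F$, I would assign $h_{i,j}$ by fixing the boundary according to Definition~\ref{def:heightfcn} and then propagating inward: between two horizontally adjacent cells $h_{i,j}$ and $h_{i,j+1}$, the unique FPL edge separating them is either present or absent, and the height difference $h_{i,j+1} - h_{i,j} \in \{+1,-1\}$ is determined by this presence together with the parity of $i+j$ (with an analogous rule vertically). One must fix the sign convention so that it agrees with the prescribed boundary values and with the composition through ASMs/corner-sum matrices indicated earlier in the section.

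Second, I would verify well-definedness by a local check. Each interior vertex of the $[n] \times [n]$ grid is surrounded by four cells of the height function matrix, and well-definedness is equivalent to the statement that, going around the vertex, the four signed increments sum to zero. Since an FPL vertex has exactly two of its four incident edges present, the signs $(+1,+1,-1,-1)$ appear in some cyclic order around the vertex, and the total is zero; this case analysis covers the six legal vertex configurations. For vertices on the boundary, one checks directly that the propagated values agree with the prescribed boundary $h_{0,k} = h_{k,0} = k$ and $h_{n,k} = h_{k,n} = n - k$, which matches the FPL boundary conditions of Definition~\ref{def:fpl} by construction of the sign convention.

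Finally, I would construct the inverse. Given a height function matrix, the difference $h_{i,j+1} - h_{i,j} \in \{+1,-1\}$ together with the parity of $i+j$ determines whether to draw the corresponding edge in the FPL; the same rule applies vertically. The boundary conditions on $h$ then force the prescribed FPL boundary edges. The condition that each interior FPL vertex has exactly two incident edges is precisely the condition that $h$ changes by $\pm 1$ between adjacent cells with the signs summing to zero around each such vertex, which holds automatically. The main obstacle is simply bookkeeping: choosing a sign convention that is compatible with the ASM-to-height-function bijection specified earlier (via corner sum matrices) and with the standard ASM-to-FPL bijection, so that the composition with Proposition~\ref{prop:oihf} gives the desired toggling interpretation on FPLs used later for gyration. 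Once the convention is pinned down, the above local verification and inversion are routine.
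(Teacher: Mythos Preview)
Your proposal is correct and describes the same bijection as the paper, but you build it in the opposite direction and with a different local rule. The paper starts from the height function and goes to the FPL: place the $[n]\times[n]$ grid so that each interior height entry is surrounded by four dots, then draw a horizontal edge between two horizontally adjacent entries precisely when they are $2k$ and $2k+1$ (in either order), and a vertical edge precisely when they are $2k-1$ and $2k$. Since adjacent height entries always differ by $1$, every adjacent pair falls into exactly one of these two cases, and one checks that this forces the correct boundary edges and two edges per interior vertex.

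The paper's formulation has two advantages over yours. First, by going from height functions to FPLs, there is no consistency/propagation issue to verify: the height function is given, and one simply reads off edges locally. Second, the edge rule is stated in terms of the \emph{values} of the height entries rather than the parity of the position $i+j$; since $h_{i,j}\equiv i+j\pmod 2$, these are equivalent, but the value-based rule makes the later statement of Proposition~\ref{prop:togcorresp} (that incrementing/decrementing $h_{i,j}$ by $2$ is exactly the FPL local move) immediate, with no sign-convention bookkeeping left to the reader. Your approach is sound, but you may find the paper's direction both shorter and better adapted to the gyration/toggling correspondence that follows.
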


\begin{proof}
Starting with a height function matrix, first draw an $[n]\times[n]$ grid of dots so that each interior number in the height function matrix has four surrounding corner dots. Separate two horizontally adjacent numbers by an edge if the numbers are $2k$ and $2k+1$ (in either order) for any integer $k$. Separate two vertically adjacent numbers by an edge if the numbers are $2k-1$ and $2k$ (in either order) for any integer $k$. This yields the required boundary conditions and the condition that interior vertices have exactly two adjacent edges.
See Figure~\ref{ex:hf_fpl} for a construction of this bijection.
\end{proof}

\begin{figure}[htb]
\begin{center}
\includegraphics[scale=0.85]{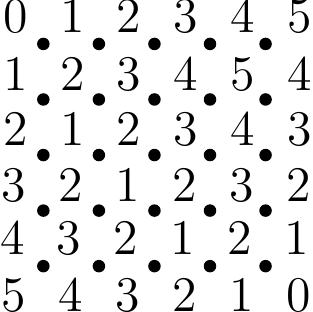}
\hspace{1cm}
\includegraphics[scale=0.85]{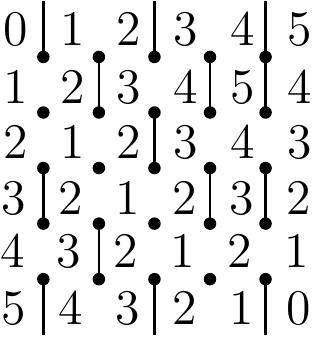}
\hspace{1cm}
\includegraphics[scale=0.85]{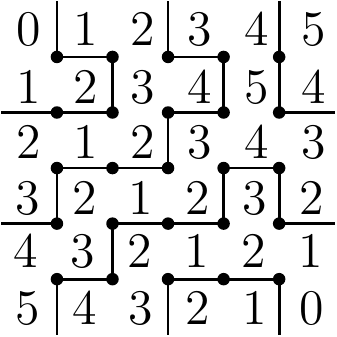}
\end{center}
\caption{The construction of an FPL from the height function matrix of Figure~\ref{ex:aposetgyr}. Left: The $5\times 5$ grid of FPL interior vertices overlaid on the height function matrix. Center: The addition of the vertical edges separating $2k$ and $2k+1$. Right: The addition of the horizontal edges separating $2k-1$ and $2k$.}
\label{ex:hf_fpl}
\end{figure}

We now define the \emph{link pattern} of an FPL; see Figure~\ref{ex:fplbdry}, right.
\begin{definition} 
\label{def:lp}
	Given a fully-packed loop, number the external edges clockwise, starting with the upper left external edge. Each external edge will be connected by a path to another external edge, and these paths will never cross.   
	This matching on the external edges will thus be a noncrossing matching on $2n$ points, and is called the \emph{link pattern} of the FPL.
\end{definition}

Observe the link pattern in Figure~\ref{ex:fplbdry}, right, is the noncrossing matching $(1,8)$, $(2,5)$, $(3,4)$, $(6,7)$. Note that the number of noncrossing matchings on $2n$ points is given by the $n$th Catalan number $Cat(n):=\frac{1}{n+1}\binom{2n}{n}$; see~\cite{Stanley_Cat} for more on the Catalan numbers.

In 2000, B.\ Wieland showed the surprisingly cyclic nature of an action called \emph{gyration} on fully-packed loop configurations.

\begin{definition}[\cite{wieland}]
\label{def:fplgyr}
Given an $[n]\times[n]$ grid of dots, color the interiors of the squares in a checkerboard pattern.  Given an FPL of order $n$ drawn on this grid, its \emph{gyration} is computed by first visiting all squares of one color then all squares of the other color,
performing the ``local move'' which swaps the edges around a square if the edges are parallel and otherwise leaves them fixed.
\end{definition}

Figure~\ref{ex:fpltoggle} shows the nontrivial local move of Wieland's gyration. Figure~\ref{ex:fplgyration} lists the FPLs by orbits under gyration.  

\begin{theorem}[\cite{wieland}]
Gyration on an FPL rotates the link pattern by a factor of $2\pi/2n$.
\end{theorem}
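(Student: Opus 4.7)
The plan is to show that gyration shifts the cyclic sequence of link-pattern endpoints by exactly one position in each of its two half-steps, giving a total rotation by $2\pi/(2n)$. I would reinterpret Wieland's original argument as a ``boundary shift'' bijection: the first half of gyration (say, the local moves on all black squares) is really a bijection between FPLs with the standard boundary conditions and FPLs with the boundary conditions rotated by one position around the square, and the second half (moves on white squares) is the analogous bijection rotating back while shifting one more step. Composing gives the advertised $2\pi/(2n)$ rotation of the link pattern.

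First I would verify that the local move at one square is a well-defined involution and that the moves at any two squares of the same checkerboard color commute, since same-color squares share no edge. This makes the ``half-gyrations'' unambiguous products of commuting involutions. Next I would classify, up to symmetry, the six possible configurations of the four edges around a single interior square: the two ``parallel'' ones (both horizontals or both verticals) are swapped by the move, and the other four ``perpendicular'' cases are fixed. In each case one reads off the local matching of the four corner dots and checks that after the move the square still has exactly two incident edges at each of its corners, so that local application of the moves respects the FPL degree constraints.

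The central step is the boundary-shift lemma: performing all black-square local moves sends an FPL with the usual boundary (alternating external edges starting at the upper-left) to a valid FPL with boundary edges shifted by one dot clockwise, and this map is a bijection. I would prove this by pairing up each external edge with the unique adjacent black (or, in the corner cases, white) square it ``belongs to,'' showing that along the boundary the local moves reroute exactly those edges into their clockwise neighbors' positions while interior paths remain coherent. Once this is in hand, it follows that any path entering at external edge $i$ under the original boundary now enters at position $i+1$ under the shifted boundary, so the associated noncrossing matching is rotated by one index. Applying the second half with colors reversed produces another rotation by one index and simultaneously undoes the boundary shift, yielding a rotation of the link pattern by $2\pi/(2n)$ total.

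The main obstacle is the boundary-shift lemma, because it is essentially a global statement about a product of many commuting local moves. The difficulty is greatest at the four corners of the grid, where the checkerboard coloring and the prescribed external edges interact asymmetrically, so one must handle these corner cases separately to confirm that the shift is genuinely by one position all the way around (rather than, say, creating an inconsistency at a corner). One clean way to handle this is to argue by induction on the number of parallel-edge squares the half-gyration actually acts on nontrivially, using the commutativity of non-adjacent moves to perform them in a convenient order that reveals the boundary shift incrementally.
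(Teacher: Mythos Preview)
The paper does not prove this theorem; it merely states it with a citation to Wieland~\cite{wieland}. So there is no ``paper's own proof'' to compare against. Your outline is in the spirit of Wieland's original argument (decompose gyration into two half-gyrations, each of which is a bijection to FPLs with the boundary shifted by one position), so in that sense you are on the right track.

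That said, the proposal has two real gaps. First, the bookkeeping of rotation amounts is inconsistent: you say each half ``rotates the matching by one index,'' yet claim the total is $2\pi/(2n)$, which is \emph{one} index. The correct accounting is that each half shifts the boundary by one \emph{grid position}; two grid positions equal one external-edge label, which is the advertised $2\pi/(2n)$. This needs to be stated carefully, and distinguishing ``boundary positions'' from ``external-edge labels'' is exactly where the argument is delicate.

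Second, and more seriously, the phrase ``interior paths remain coherent'' hides the entire content of the theorem. The boundary-shift lemma as you state it (half-gyration sends FPLs with one boundary to FPLs with the shifted boundary) is not hard; what is hard is showing that this bijection \emph{preserves connectivity}, i.e., that the external edge formerly labeled $i$ is connected to the one formerly labeled $j$ if and only if their shifted counterparts are connected after the half-gyration. The local move on a single square genuinely reroutes paths through that square, so connectivity preservation is a global statement that does not follow from your square-by-square induction. Wieland's actual argument tracks connectivity by passing to an auxiliary picture (essentially, reading the configuration on the complementary set of squares) in which the half-gyration becomes transparent; without some device of this kind, the ``paths remain coherent'' step is an assertion, not a proof.
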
 

The following proposition, whose proof follows from the bijections of Propositions~\ref{prop:oihf} and~\ref{prop:fplhf}, states what the FPL local move corresponds to on height functions and order ideals.

\begin{proposition}
\label{prop:togcorresp}
Let $(h_{i,j})_{i,j=0}^{n}$ be a height function matrix, $I\in J(\aposet_n)$ the corresponding order ideal, and $F$ the corresponding FPL, via the bijections of Propositions~\ref{prop:oihf} and~\ref{prop:fplhf}.
Then the following are equivalent.
\begin{enumerate}
\item The FPL local move from Definition~\ref{def:fplgyr} applied to $F$ at the square in row $i$ and column $j$, where the rows (columns) are numbered from the top (left) starting at 1.
\item Incrementing or decrementing height function matrix entry $h_{i,j}$ by 2, if possible.
\item Toggling at some element in $S_{i,j}$ (from the proof of Proposition~\ref{prop:oihf}) such that incrementing $h_{i,j}$ corresponds to toggling an element out of the order ideal and decrementing $h_{i,j}$ corresponds to toggling an element into the order ideal.
\end{enumerate}
\end{proposition}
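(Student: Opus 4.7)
The plan is to verify the three equivalences by tracing each operation through the explicit bijections of Propositions~\ref{prop:oihf} and~\ref{prop:fplhf}. Because all three operations are purely local — each affects only the entry $h_{i,j}$ of the height function matrix, only the chain $S_{i,j}$ of $\aposet_n$, and only the single FPL square in row $i$, column $j$ — the proof reduces to a finite case analysis of small local configurations. I would prove (2)~$\Leftrightarrow$~(3) and (2)~$\Leftrightarrow$~(1) separately, since each uses a different one of the two prior bijections.

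For (2)~$\Leftrightarrow$~(3), the key formula is the identity
\[
h_{i,j} \;=\; \min(i+j,\,2n-i-j) - 2\,|I \cap S_{i,j}|
\]
from the proof of Proposition~\ref{prop:oihf}, so a $\pm 2$ change in $h_{i,j}$ is exactly a $\mp 1$ change in $|I \cap S_{i,j}|$. Since $S_{i,j}$ is a chain in $\aposet_n$ and $I$ is an order ideal, $I \cap S_{i,j}$ is a down-closed subset of the chain, hence an initial segment; changing its size by $\pm 1$ therefore means adding or removing a single element of $S_{i,j}$, namely the top of $I \cap S_{i,j}$ or the bottom of $S_{i,j}\setminus I$. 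One must then check that the chosen element is actually toggleable, i.e.\ maximal in $I$ or minimal in $P\setminus I$. This follows from the cover relations in Definition~\ref{def:aposetex}, which force the elements covering (or covered by) any element of $S_{i,j}$ to lie in the four adjacent chains $S_{i\pm 1,j}$ and $S_{i,j\pm 1}$; compatibility of $|I\cap S_{i,j}|$ with $|I\cap S_{i',j'}|$ translates into the height condition that $h_{i,j}$ may be moved by $\pm 2$ exactly when its four orthogonal neighbors remain one off from the new central value.

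For (1)~$\Leftrightarrow$~(2), recall from Proposition~\ref{prop:fplhf} that a vertical edge appears between horizontally adjacent cells with heights $\{2k,2k+1\}$ and a horizontal edge between vertically adjacent cells with heights $\{2k-1,2k\}$. A direct case analysis on the four heights $h_{i-1,j-1},h_{i-1,j},h_{i,j-1},h_{i,j}$ surrounding the FPL square — taking into account that consecutive values differ by exactly $\pm 1$ — shows that the four edges of the square form a ``parallel'' pattern (both horizontal or both vertical, the only configurations in which the local move of Definition~\ref{def:fplgyr} is nontrivial) exactly in the two situations where the central height is $2$ more, or $2$ less, than all four of its neighbors; swapping the parallel edges is precisely the operation of flipping $h_{i,j}$ from $h_{i,j}$ to $h_{i,j}\pm 2$.

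The main obstacle will be the bookkeeping rather than any genuine conceptual difficulty: one must keep straight three different indexing conventions — the triple $(i,j,k)$ coordinates on $\aposet_n$, the matrix indices $0\le i,j\le n$ for the height function, and the grid-square labels $1\le i,j\le n-1$ for the FPL — and then verify the toggleability side condition in (2)~$\Rightarrow$~(3). Once the indexing is aligned and the five height values around a local patch are tabulated, the proposition falls out as a repackaging of the concluding observations in the proofs of Propositions~\ref{prop:oihf} and~\ref{prop:fplhf}.
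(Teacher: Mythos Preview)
Your approach is correct and is exactly what the paper intends: the paper does not give a separate proof of this proposition, merely remarking that it ``follows from the bijections of Propositions~\ref{prop:oihf} and~\ref{prop:fplhf},'' and your plan carries this out in detail. One small indexing slip in your sketch of (1)~$\Leftrightarrow$~(2): in the paper's conventions the interior height value $h_{i,j}$ sits at the \emph{center} of the FPL square in row~$i$, column~$j$, so the relevant heights are the central $h_{i,j}$ together with its four orthogonal neighbors $h_{i\pm 1,j}$, $h_{i,j\pm 1}$ (not the $2\times 2$ block $h_{i-1,j-1},h_{i-1,j},h_{i,j-1},h_{i,j}$), which is in fact consistent with your own subsequent description of a ``central height'' and its ``four neighbors.''
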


\begin{figure}[htb]
\begin{center}
\includegraphics[scale=0.85]{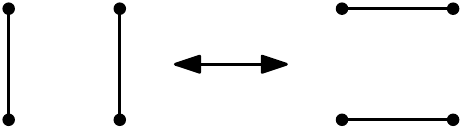}
\end{center}
\caption{Gyration's nontrivial local move. This corresponds to both a nontrivial toggle in $T(\aposet_n)$ and the incrementing or decrementing of a height function matrix entry by $2$.}
\label{ex:fpltoggle}
\end{figure}

In Proposition~\ref{prop:asmgyr}, we give the characterization from~\cite{prorow} of gyration as a toggle group element, namely, that gyration on fully-packed loop configurations of order $n$ is equivalent to the toggle group action on $J(\aposet_n)$ which toggles the elements in even ranks first, then odd ranks. Then in Corollary~\ref{cor:asmcor}, we apply this characterization to a component of Cantini and Sportiello's proof~\cite{razstrogpf} of the Razumov-Stroganov correspondence.

But before we discuss the proof, we need to explain the statement of the Razumov-Stroganov correspondence.

\section{The Razumov-Stroganov correspondence}
\label{sec:rs}
The Razumov-Stroganov correspondence relates the ground state eigenvector of the $O(1)$ dense loop model on a semi-infinite cylinder to the vector of the distribution of fully-packed loops by link pattern. Though this correspondence provides an important link between physics and combinatorics, the result can be stated in purely combinatorial terms.

We first define the rotation operator $R$ and the \emph{(affine) Temperley-Lieb operators} 
$e_j, 1\leq j\leq 2n$, which act on link patterns on $2n$ vertices. (In Definition~\ref{def:lp}, we defined the link pattern of a fully-packed loop configuration. We now consider link patterns abstractly as noncrossing matchings on $2n$ vertices; see Figure~\ref{ex:fpl_TL}.)
Let $R$ be the operator that rotates the link pattern one step counterclockwise. Let $e_j$ be the operator that acts as the identity if $j$ and $j+1$ are connected, and otherwise connects $j$ with $j+1$ and matches the indices previously matched to $j$ and $j + 1$ with each other (consider the indices modulo $2n$ so that $e_{2n}$ matches $2n$ and $1$). 
These operators, along with some relations, generate the (affine) Temperley-Lieb algebra, which can be nicely visualized as a \emph{diagram algebra}. See~\cite{razstrogpf} and~\cite{de_Gier_loops} for further explanation.

\begin{figure}[htb]
\begin{center}
\includegraphics[scale=0.85]{fpl_ex_linkpattern.pdf}
\hspace{.75in}
\includegraphics[scale=0.85]{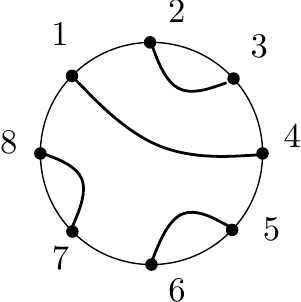}
\hspace{.75in}
\includegraphics[scale=0.85]{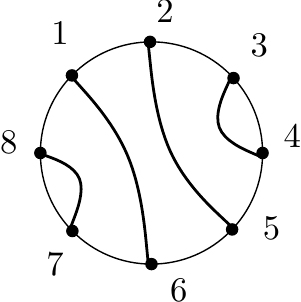}
\end{center}
\caption{Left: the link pattern of Figure~\ref{ex:fplbdry}. Center: the action of $R$ on this link pattern. Right: the action of $e_7$ on the link pattern from the left.}
\label{ex:fpl_TL}
\end{figure}

Using the Temperley-Lieb operators, we give a combinatorial statement of the 
\linebreak Razumov-Stroganov correspondence, as first formulated by Razumov and Stroganov in Conjecture 2 of~\cite{razstrog}. 
\begin{theorem}[Cantini-Sportiello, 2011]
\label{thm:razstrogcomb}
The average value, over all $2n$ Temperley-Lieb operators $e_j$, of the number of FPLs with link pattern $\pi'$ such that $e_j \pi'=\pi$ equals the number of FPLs with link pattern $\pi$.
\end{theorem}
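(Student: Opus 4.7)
The plan is to adapt the strategy of Cantini and Sportiello~\cite{razstrogpf}, now expressed in the toggle-theoretic language of this paper. The essential input is the homomesy result of Theorem~\ref{thm:genposet} applied to $\aposet_n$: via Proposition~\ref{prop:asmgyr} (identifying $\gyr$ on $J(\aposet_n)$ with Wieland's gyration on FPLs) and Proposition~\ref{prop:togcorresp} (relating toggles to FPL local moves), it specializes to the statement, recorded as Corollary~\ref{cor:asmcor}, that within each FPL gyration orbit and for each interior square, the number of FPLs in which the nontrivial local move at that square would add an edge equals the number in which it would remove an edge. In orbit-sum form this is exactly the balance property that Cantini and Sportiello isolate as their Lemma~4.1.

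First, I would invoke Wieland's theorem that gyration rotates link patterns by one step, which gives the rotational invariance $N_\pi = N_{R\pi}$, where $N_\pi$ denotes the number of FPLs with link pattern $\pi$. Next, for each $j$ I would analyze $e_j$ geometrically: either $j$ and $j+1$ are already matched in $\pi(F)$ (in which case $F$ is itself a preimage), or $\pi$ must contain the arc $(j,j+1)$ and the preimages of $\pi$ under $e_j$ are precisely those FPLs whose link pattern is obtained from $\pi$ by splitting $(j,j+1)$ and rematching the former partners. Using Proposition~\ref{prop:togcorresp}, each such preimage can be exhibited as an FPL on which the boundary local move near external edges $j,\,j+1$ toggles a specific join-irreducible $p = p(j) \in \aposet_n$, and the pairing of preimage $\pi'$ to preimage $\pi$ corresponds to a pair $(I, t_p(I))$ of order ideals differing by a single toggle at $p(j)$.

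Finally, summing $\#\{F : e_j\pi(F) = \pi\}$ over $j$ and reorganizing the total by gyration orbit, Corollary~\ref{cor:asmcor} forces the orbit sums of $\togstat_{p(j)}$ to vanish at every boundary element, so the ``add'' and ``remove'' preimage counts at position $j$ balance across each orbit; combined with Wieland's rotational invariance, what remains is exactly $2n$ copies of $N_\pi$, yielding the desired identity $\sum_j \#\{F : e_j \pi(F) = \pi\} = 2n\, N_\pi$. The principal obstacle is the middle step: one must verify that the combinatorial effect of $e_j$ on link patterns matches the toggleability statistic $\togstat_{p(j)}$ at the appropriate boundary element of $\aposet_n$, including a case analysis of how arcs incident to positions $j$ and $j+1$ respond to the nontrivial boundary local move, and confirming that the noncrossing condition singles out a unique rematching. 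This is the point at which the geometric content of Wieland's argument must be extracted and rephrased in poset terms; once that bridge is in place, the homomesy of Theorem~\ref{thm:genposet} supplies the rest of the proof essentially mechanically.
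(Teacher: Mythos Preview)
The paper does not prove this theorem. Theorem~\ref{thm:razstrogcomb} is stated as a result of Cantini and Sportiello~\cite{razstrogpf}; the paper's own contribution is to recover one ingredient of their argument---their Lemma~4.1---as Corollary~\ref{cor:asmcor} of the general homomesy result Theorem~\ref{thm:genposet}. There is therefore no proof in the paper to compare your proposal against.

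As for the sketch itself: you correctly isolate Wieland's rotation theorem and Corollary~\ref{cor:asmcor} as two of the three inputs, but the middle step you flag as ``the principal obstacle'' is not a routine verification---it is the substance of the Cantini--Sportiello proof, and your description of the mechanism is not how it actually works. In~\cite{razstrogpf} one does not identify $e_j$-preimages of $\pi$ directly with toggle pairs $(I,t_{p(j)}(I))$ at a single boundary element $p(j)\in\aposet_n$. Instead, for each boundary square $\alpha$ one defines a \emph{modified gyration} that performs every local move of ordinary gyration except the one at $\alpha$, and proves a separate geometric lemma: this modified map is still a bijection on FPLs, and it acts on the link pattern as the rotation $R$ when $\alpha$ is inactive and as $R$ composed with a Temperley--Lieb generator when $\alpha$ is active. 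Summing over the $2n$ boundary squares and over a gyration orbit, the balance of active versus inactive occurrences supplied by Lemma~4.1 (your Corollary~\ref{cor:asmcor}) then yields the factor $2n$.

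Your sketch collapses this two-layer structure---the geometric lemma about modified gyration, plus the homomesy balance---into a single direct correspondence between $e_j$-preimages and toggles. That shortcut does not go through: the set of link patterns $\pi'$ with $e_j\pi'=\pi$ is in general large, and there is no single poset element $p(j)$ whose toggle orbit parametrizes the FPLs carrying those patterns. Proposition~\ref{prop:togcorresp} tells you what one local move does to one height-function entry; it does not by itself explain why skipping that move changes the link pattern by exactly a Temperley--Lieb generator. That analysis is carried out in~\cite{razstrogpf} and is not contained in, nor reducible to, the results of this paper.
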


Stated another way, the number of FPLs whose link pattern is mapped to $\pi$ by any of the $e_j$ equals $2n$ times the number of FPLs with link pattern $\pi$. 

We now state the Razumov-Stroganov correspondence from the perspective (and using the notation) of statistical physics; this was given as an alternative formulation of the conjecture in~\cite{razstrog} and is the statement proved in~\cite{razstrogpf}. The operator $H_n:=\sum_{j=1}^n e_j$ is called the \emph{Hamiltonian}. If we define $|s_n\rangle$ to be the vector of Catalan length $Cat(n)$ which gives the distribution of FPLs by link pattern, we can then write the Razumov-Stroganov correspondence as:
\[ H_n |s_n\rangle = 2n |s_n\rangle.\]
Or, defining $\mathbf{RS}_n:=(H_n - 2n)|s_n\rangle$, we can restate this as $\mathbf{RS}_n=\mathbf{0}$.

The above explanation is sufficient for the purposes of understanding the combinatorial side of the Razumov-Stroganov correspondence. But in order to understand the connection to the $O(1)$ dense loop model on a semi-infinite cylinder, we give the following sketch of an explanation. See Section 3.3 of~\cite{ZinnJustin} or Section 1.4 of~\cite{Romik2014} for full details. 

Define the $O(1)$ dense loop model on a semi-infinite cylinder as follows. Fix a semi-infinite cylinder of circumference $2n$. Within each unit square, place one of the following two tiles: \includegraphics[scale=.03]{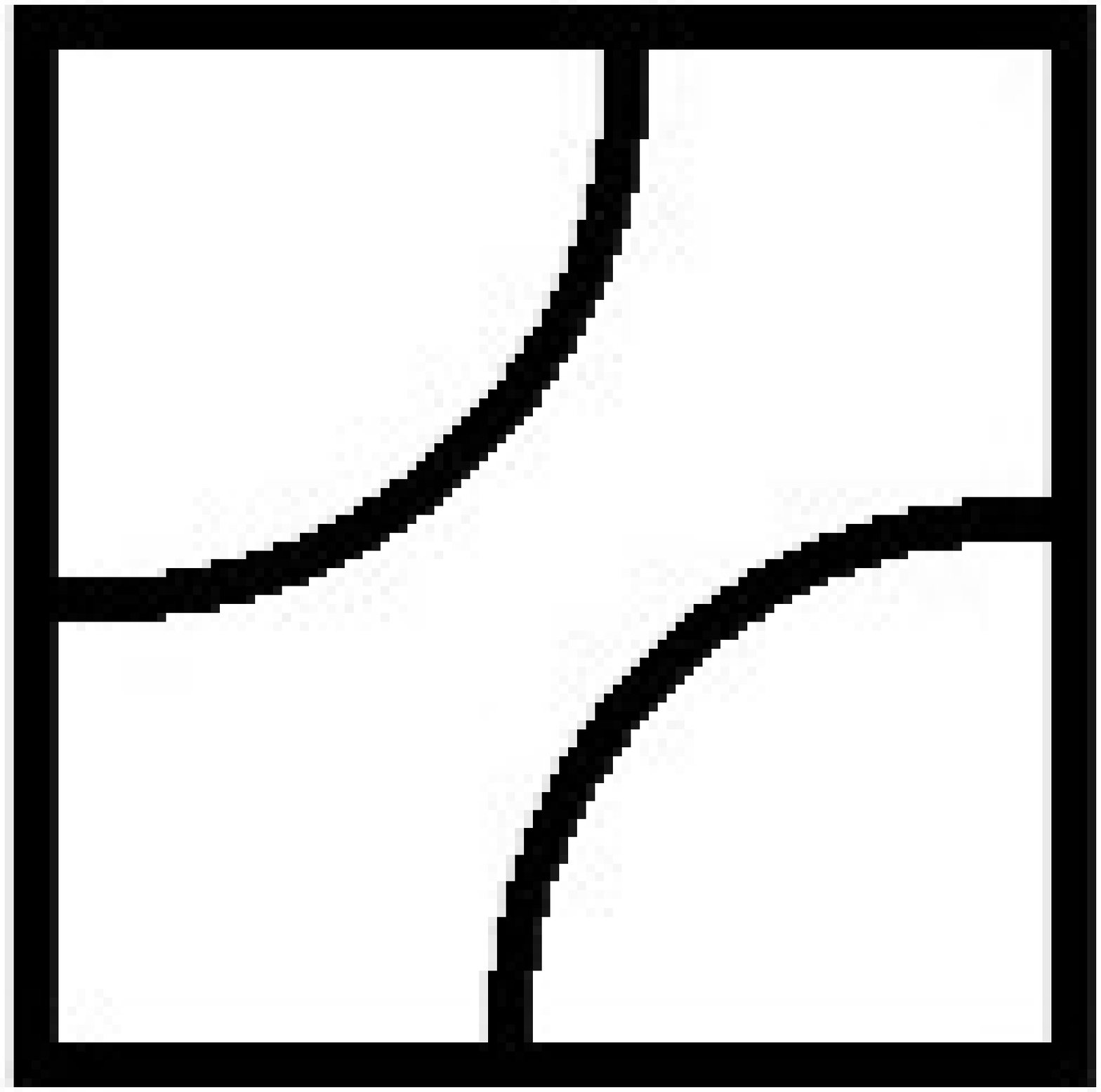} or \includegraphics[scale=.03]{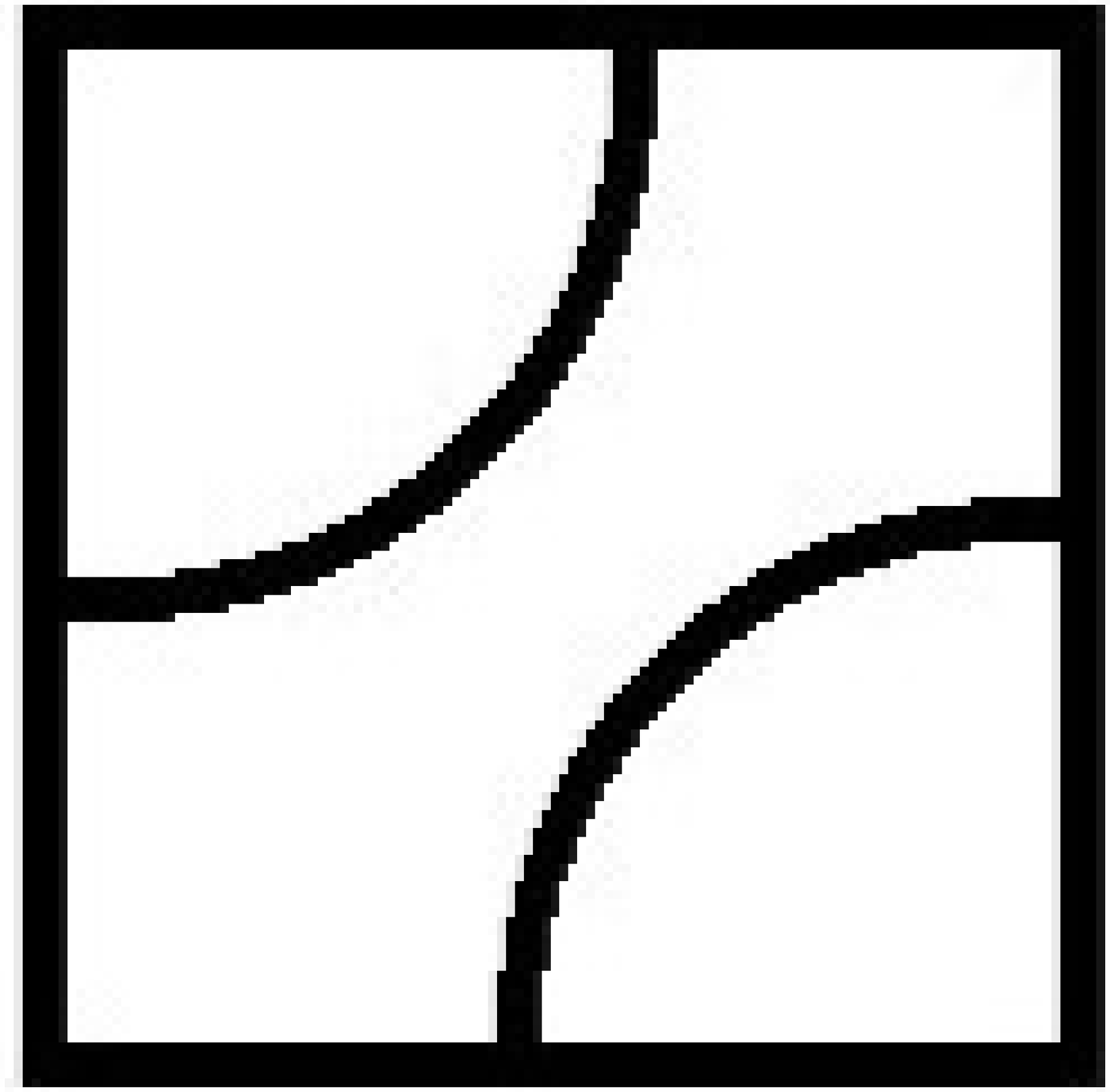}, where each is chosen with probability $\frac{1}{2}$. Note that, almost certainly, each path that begins at one of the $2n$ points on the finite end of the cylinder will terminate at another of these endpoints, rather than going off to infinity; see Lemma 1.6 of~\cite{Romik2014}. So each configuration of the cylinder induces a link pattern (or noncrossing matching) on the $2n$ endpoints. 

An alternative statement of the Razumov-Stroganov conjecture is the following. Let $|\tilde{\Psi}\rangle$ be the vector of length $Cat(n)$ that gives the probability distribution by link pattern of configurations of the $O(1)$ dense loop model on a semi-infinite cylinder of circumference $2n$ (this is called the \emph{ground state eigenvector}). Then $|\tilde{\Psi}\rangle = \frac{1}{ASM(n)}|s_n\rangle$ where $ASM(n)$ is the number of $n\times n$ alternating sign matrices (or fully-packed loops of order $n$). We get from this statement to the statement of Theorem~\ref{thm:razstrogcomb} as follows.

Consider the Markov process defined by adding a row to the cylinder with each of the $2n$ new tiles chosen from the two possibilities with probabilities $p$ and $1-p$. It can be shown, using either the Yang-Baxter equation (see, for example, Section 4 of~\cite{PeledRomik}) or a bijective approach (see Section 3 of~\cite{PeledRomik}) that the associated transfer matrices with different values of $p$ commute, so that, surprisingly, the probability distribution by link pattern is independent of $p$. So we can consider, rather, the limit $p\to 0$. (See, for example, Theorem 1.8 and Appendix B of~\cite{Romik2014}.) Thus, a new row of the cylinder usually consists of all one type of tile; this serves to rotate the link pattern by the action of the rotation operator $R$. But, occasionally, there will be a single tile of the other type in the new row. In this situation, the effect on the link pattern amounts to the rotation $R$ combined with the action of a Temperley-Lieb operator $e_j$. Thus, the action on the link pattern of adding a row in the Markov process is given by $R$ or $R e_j$ for some $1\leq j\leq 2n$. 
So the probability distribution by link pattern of configurations of the $O(1)$ dense loop model on a semi-infinite cylinder is equal to the probability distribution by link pattern of the Temperley-Lieb Markov process. 

\section{Homomesy}
\label{sec:homomesy}

In 2013, J.\ Propp and T.\ Roby isolated and named the \emph{homomesy} phenomenon (Greek for ``same middle'')~\cite{homomesy}.  This phenomenon occurs when given a set of combinatorial objects, a bijective action on this set, and a combinatorial statistic associated to each object in the set, the average value of this statistic over an orbit of the action is the same no matter which orbit you consider. That is, this average on an orbit equals the global average value of the statistic. We state this more formally in the following definition, in slightly less generality than in \cite{homomesy}.

\begin{definition}
Given a finite set $X$ of combinatorial objects, an invertible map $\tau$ from $X$ to
itself, and a \emph{combinatorial statistic} on $X$, that is, a map $f : X \rightarrow \mathbb{Z}$, we say the triple $(X, \tau, f)$ exhibits \emph{homomesy} if and only if there exists a constant $c\in \mathbb{Q}$ such
that for every $\tau$-orbit $\mathcal{O}\subseteq X$ 
\[\frac{1}{|\mathcal{O}|}\sum_{x\in\mathcal{O}}f(x)=c.\]
In this situation we say that the statistic $f$ is \emph{homomesic} (or $c$-\emph{mesic}) relative to the action of $\tau$ on $X$. 
\end{definition}

In the same paper~\cite{homomesy}, Propp and Roby proved that homomesy occurs in the case where the set $X$ equals the set of order ideals of the product of two chains poset, the action $\tau$ is rowmotion, and the statistic $f$ is either the size of the order ideal or the size of the corresponding antichain of maximal elements. They also proved that the size of the order ideal statistic is homomesic under \emph{promotion}, a toggle group action shown in~\cite{prorow} to be conjugate to rowmotion. Homomesy has been a growing area of research, with further results found on other posets~\cite{SHaddadan}~\cite{Rush}, semistandard Young tableaux~\cite{bloom_homomesy}, oscillating tableaux~\cite{HopkinsZhang}, and piecewise-linear generalizations of toggle group actions~\cite{EinsteinPropp}.

In the next section, we prove that rowmotion and another of its conjugates exhibits homomesy with respect to a statistic isolating the \emph{toggleability} of a poset element. We then apply this result to the Razumov-Stroganov correspondence.

\section{Results}
\label{sec:results}

In this section, we first define a statistic on order ideals which isolates the \emph{toggleability} of an element in the poset. 
We then show that this statistic is homomesic with respect to both rowmotion (Lemma~\ref{lem:rowhomomesy}) and $\gyr$ (Theorem~\ref{thm:genposet}), where $\gyr$ is the toggle group action given in Definition~\ref{def:gyr} which corresponds to FPL gyration.  Finally, we obtain a key lemma from Cantini and Sportiello's proof of the Razumov-Stroganov correspondence~\cite{razstrogpf}, as a corollary of Theorem~\ref{thm:genposet}.

\begin{definition}
\label{def:toggleability}
Fix a finite poset $P$. For each $p \in P$, define the \emph{toggleability} statistic $\togstat_p: J(P) \to \{-1,0,1\}$ as follows:
\begin{center}
$\togstat_p(I) = \left\{
	\begin{array}{rl}
		1 & \text{ if } p\in (P\setminus I)_{\min},\\
		-1 & \text{ if } p \in I_{\max},\\
		0 & \text{ otherwise}.\\
	\end{array} \right.
$
\end{center}
\end{definition}

Given an order ideal $I$, the poset elements $p$ for which $\togstat_p(I)\neq 0$ are called the \emph{toggleable} elements of $P$ for $I$, since these are precisely the elements of $P$ for which the toggle $t_p$ acts nontrivially on $I$. 

\begin{lemma}
\label{lem:rowhomomesy}
Given any finite poset $P$ and $p\in P$, $\togstat_p$ is 0-mesic with respect to rowmotion.
\end{lemma}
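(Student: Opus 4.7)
The plan is to exploit the definition of rowmotion directly to set up a bijection, within each orbit, between those order ideals on which $\togstat_p$ takes the value $+1$ and those on which it takes the value $-1$. Since $\togstat_p$ only ever takes the values $-1,0,1$, any such bijection forces the sum of $\togstat_p$ over the orbit to be $0$.

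The heart of the argument is the following elementary observation about rowmotion: for any $I\in J(P)$ and any $p\in P$,
\[
\togstat_p(I)=1 \iff \togstat_p(\row(I))=-1.
\]
The forward direction is essentially the definition of $\row$. If $p\in(P\setminus I)_{\min}$, then $p$ lies in the antichain generating $\row(I)$, so $p\in\row(I)$; and no element strictly above $p$ can lie in $\row(I)$, since any $q\in\row(I)$ is dominated by some element of the generating antichain, and $p$ itself is in that antichain (so any $q>p$ would have to be dominated by a distinct antichain element, contradicting incomparability). Hence $p\in\row(I)_{\max}$. Conversely, if $p\in\row(I)_{\max}$, then $p\in\row(I)$ means $p\le q$ for some $q\in(P\setminus I)_{\min}$, but maximality in $\row(I)$ forces $p=q$, so $p\in(P\setminus I)_{\min}$.

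Once this is in hand, fix any $\row$-orbit $\mathcal{O}$ and consider the restriction of $\row$ to $\mathcal{O}$. The map $I\mapsto\row(I)$ is a bijection from $\mathcal{O}$ to itself, and by the equivalence above it restricts to a bijection
\[
\{I\in\mathcal{O}:\togstat_p(I)=1\}\;\longrightarrow\;\{I'\in\mathcal{O}:\togstat_p(I')=-1\},
\]
since $\row^{-1}$ supplies the inverse. Therefore the two sets have the same cardinality, and
\[
\sum_{I\in\mathcal{O}}\togstat_p(I)=\bigl|\{I\in\mathcal{O}:\togstat_p(I)=1\}\bigr|-\bigl|\{I\in\mathcal{O}:\togstat_p(I)=-1\}\bigr|=0,
\]
so the average over $\mathcal{O}$ is $0$, as required.

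The only step requiring any thought is the equivalence displayed above, and even there the main subtlety is just unpacking what ``generated by the antichain $(P\setminus I)_{\min}$'' means on both sides. There is no induction, no global combinatorial argument across orbits, and no use of the toggle-group description of $\row$ from Theorem~\ref{thm:linextrow}; the proof flows entirely from Definition~\ref{def:row}, which is precisely the virtue advertised in the paragraph just before the lemma.
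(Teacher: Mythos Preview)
Your proof is correct and is essentially the same as the paper's: both hinge on the equivalence $\togstat_p(I)=1 \iff \togstat_p(\row(I))=-1$, derived directly from Definition~\ref{def:row}. The paper phrases the conclusion as ``nonzero values of $\togstat_p$ alternate in sign along the orbit,'' whereas you phrase it as a bijection between the $+1$'s and the $-1$'s via $\row$; these are two ways of saying the same thing.
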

\begin{proof}

Consider  $I\in J(P)$.  Then it follows immediately from Definition~\ref{def:row} (for rowmotion) and Definition~\ref{def:toggleability} (for the toggleability statistic) that the four statements $\togstat_p(I)=1$, $p\in(P\setminus I)_{\min}$, $p\in(\row(I))_{\max}$ and $\togstat_p(\row(I))=-1$ are all equivalent.  Therefore, the only possibilities for $(\togstat_p(I),\togstat_p(\row(I)))$ are $(1,-1)$, $(0,1)$, $(0,0)$, $(-1,1)$ and $(-1,0)$. Hence, along any orbit of $\row$, the nonzero values of $\togstat_p$ alternate in sign (with no 0's separating a $1$ followed by a $-1$, but potentially arbitrarily many 0's separating a $-1$ followed by a $1$).  
\end{proof}

It is not true that we automatically get the same result for all toggle group actions conjugate to rowmotion. For example, in the poset $P=\{a,b,c\}$ with relations $a<c, b<c$, the toggleability statistic $\togstat_c$ is not homomesic with respect to \emph{promotion} (toggle in the order $a,c,b$). But for the particular conjugate action of Definition~\ref{def:gyr} below, we prove in Theorem~\ref{thm:genposet} homomesy of the same statistic. 

\begin{definition} \rm
\label{def:gyr}
For any finite ranked poset $P$, define \emph{gyration} $\gyr:J(P)\rightarrow J(P)$ as the toggle group action which toggles the elements in even ranks first, then odd ranks. 
\end{definition}

Recall from Lemma~\ref{lem:commute} that $t_p$ and  $t_q$ commute whenever there is no cover relation between $p$ and $q$. Thus, all elements in ranks of the same parity commute, so the above definition is unambiguous.
Also, note that for any permutation of the ranks of a ranked poset, toggling the elements by rank according to that permutation is conjugate to rowmotion in the toggle group (this is Lemma 2 of~\cite{fonderflaass}).  So we have the following.
\begin{proposition}
For any finite ranked poset $P$, there is an equivariant bijection between $J(P)$ under $\row$ and under $\gyr$, that is, $\row$ and $\gyr$ are conjugate elements in the toggle group $T(P)$.
\end{proposition}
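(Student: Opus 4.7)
The plan is to assemble the proposition directly from results already quoted in the excerpt. First, I would express both $\row$ and $\gyr$ as rank-by-rank toggle products. By Theorem~\ref{thm:linextrow}, $\row$ is the toggle-group element obtained by toggling the elements of $P$ in reverse order of any linear extension; since $P$ is ranked, one such reverse linear extension lists the elements rank-by-rank from the top rank down to rank $0$. By Lemma~\ref{lem:commute}, any two elements of the same rank commute (they cannot lie in a cover relation), so the order of toggles within a rank is immaterial. Hence $\row$ is unambiguously the product of rank-toggles applied in the order ``top rank, next rank, $\dots$, bottom rank.'' By Definition~\ref{def:gyr}, $\gyr$ is likewise a product of all rank-toggles, applied in the order ``even ranks, then odd ranks,'' and the same commutation argument makes this well-defined.

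Next I would invoke the cited Lemma~2 of~\cite{fonderflaass}, quoted in the paragraph immediately preceding the proposition: for any permutation of the ranks of a ranked poset, the element of the toggle group obtained by toggling the ranks in that permuted order is conjugate in $T(P)$ to rowmotion. The permutation relating $\gyr$ to the top-to-bottom order is the one that first lists the even ranks (in some order) and then the odd ranks (in some order); applying the cited lemma to this permutation yields that $\gyr$ is conjugate to $\row$ in $T(P)$.

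Finally, conjugate elements of the symmetric group $\mathfrak{S}_{J(P)}$ have the same cycle structure, and the conjugating element itself provides a bijection $J(P)\to J(P)$ that intertwines the two actions; this is exactly the equivariant bijection claimed by the proposition.

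Essentially no real obstacle arises: the only thing to verify is that $\gyr$ is genuinely of the rank-by-rank form to which Lemma~2 of~\cite{fonderflaass} applies, and that verification is immediate from Definition~\ref{def:gyr} together with Lemma~\ref{lem:commute}. So the proposition reduces to a one-sentence application of a cited lemma, once the rank-by-rank descriptions of $\row$ and $\gyr$ are in hand.
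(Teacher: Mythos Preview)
Your proposal is correct and matches the paper's approach exactly: the paper does not give a separate proof but simply states ``So we have the following'' after citing Lemma~2 of~\cite{fonderflaass}, which is precisely the reduction you describe. Your write-up merely spells out in more detail the rank-by-rank descriptions of $\row$ and $\gyr$ and the passage from conjugacy to an equivariant bijection that the paper leaves implicit.
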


In the alternating sign matrix case $P=\aposet_n$, we have the following proposition, which follows directly from Propositions~\ref{prop:togcorresp} and~\ref{prop:oihf}.
 
\begin{proposition}[Proposition 8.12 in~\cite{prorow}]
\label{prop:asmgyr}
$\gyr$ acting on $J(\aposet_n)$ is equivalent to gyration on fully-packed loops of order $n$. 
\end{proposition}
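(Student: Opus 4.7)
The plan is to unpack the two actions into their common atomic operations and match them phase by phase. By Proposition~\ref{prop:oihf} together with Proposition~\ref{prop:fplhf}, an order ideal $I\in J(\aposet_n)$, its height function matrix $(h_{i,j})$, and a fully-packed loop configuration $F$ all determine one another. Proposition~\ref{prop:togcorresp} then identifies the atomic operations on the two sides: applying the FPL local move at the square in row $i$ and column $j$ corresponds to toggling the unique element of $S_{i,j}$ (if any) that is currently toggleable, equivalently, to incrementing or decrementing $h_{i,j}$ by $2$. So the task reduces to showing that the two ``batches'' of atomic operations agree.

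Next I would match the parities. A single phase of FPL gyration visits all squares of one checkerboard color, i.e., all $(i,j)$ with a fixed parity of $i+j$; a single phase of $\gyr$ toggles all elements of a fixed rank parity. From Definition~\ref{def:aposetex} and Proposition~\ref{prop:aposetranked}, an element $(i-1-t,\,j-1-t,\,t)\in S_{i,j}$ has rank $n-2-(i-1-t)-(j-1-t)=n-i-j+2t$, so its parity depends only on $i+j \bmod 2$. Hence the (possibly empty) toggleable element of $S_{i,j}$ always lies in a rank of parity determined by the checkerboard color of square $(i,j)$. This shows that the ``even-color'' phase of FPL gyration coincides, operation for operation, with the phase of $\gyr$ that toggles at ranks of the corresponding parity, and likewise for the other phase.

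Finally, within each phase of either action the constituent atomic operations commute and can be carried out in any order, so the two phase-by-phase matchings assemble into a single equivariant bijection. On the FPL side, local moves at distinct squares of the same color act on disjoint edge sets. On the poset side, any two elements of the same rank in a ranked poset are incomparable, hence share no cover relation, so their toggles commute by Lemma~\ref{lem:commute}; this is also what makes Definition~\ref{def:gyr} unambiguous.

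The only real obstacle is the parity bookkeeping in the second step: one must be careful that the coordinate shift defining $S_{i,j}$ in Proposition~\ref{prop:oihf} produces a chain with elements of constant rank parity, and that this parity is the right function of $(i,j)$ to match the FPL checkerboard coloring. Once that is verified, combining with Proposition~\ref{prop:togcorresp} makes the equivalence transparent, and no further computation is required.
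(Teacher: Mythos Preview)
Your proposal is correct and follows essentially the same route as the paper. The paper simply states that the proposition ``follows directly from Propositions~\ref{prop:togcorresp} and~\ref{prop:oihf}'', and your argument is a careful unpacking of exactly that claim: you invoke the bijections of Propositions~\ref{prop:oihf} and~\ref{prop:fplhf}, use Proposition~\ref{prop:togcorresp} to identify the atomic operations, and then verify the rank/checkerboard parity match (which the paper records inside the proof of Proposition~\ref{prop:oihf} as ``$S_{i,j}$ will be a chain in $\aposet_n$ with elements from ranks of only one parity'').
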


In~\cite{prorow}, N.\ Williams and the author used these two propositions to show the following theorem. 

\begin{theorem}[Theorem 8.13  in~\cite{prorow}]
\label{thm:asmgyr}
$J(\aposet_n)$ under $\row$ and fully-packed loop configurations of order $n$ under gyration  are in equivariant bijection.  
\end{theorem}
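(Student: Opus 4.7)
The plan is to chain together the two results that immediately precede the theorem. Explicitly, from the proposition on conjugacy we get that $\row$ and $\gyr$ are conjugate in $T(\aposet_n)$, and from Proposition~\ref{prop:asmgyr} we get that $\gyr$ on $J(\aposet_n)$ corresponds under the height-function bijection to Wieland's gyration on FPLs of order $n$. Composing these two equivariances yields the desired equivariant bijection.

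First, I would unpack the conjugacy statement. Write $\gyr = g \cdot \row \cdot g^{-1}$ for some $g \in T(\aposet_n)$; such a $g$ exists by the proposition above (invoking Lemma 2 of \cite{fonderflaass}, which says that for a ranked poset, toggling the ranks in any order is conjugate to rowmotion). Then the map $g : J(\aposet_n) \to J(\aposet_n)$ is itself a bijection intertwining the two actions, in the sense that $g \circ \row = \gyr \circ g$. Consequently, orbits of $\row$ on $J(\aposet_n)$ are carried by $g$ to orbits of $\gyr$ on $J(\aposet_n)$, of the same cardinality and cyclic structure.

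Next, I would invoke Proposition~\ref{prop:asmgyr}: the composition of the bijection from $J(\aposet_n)$ to height function matrices (Proposition~\ref{prop:oihf}) with the bijection from height function matrices to FPLs (Proposition~\ref{prop:fplhf}) intertwines $\gyr$ on $J(\aposet_n)$ with Wieland's gyration on FPLs. Call this composite bijection $\Phi : J(\aposet_n) \to \mathrm{FPL}(n)$; it satisfies $\Phi \circ \gyr = \mathrm{gyration} \circ \Phi$. Composing with $g$, the map $\Phi \circ g : J(\aposet_n) \to \mathrm{FPL}(n)$ then satisfies $(\Phi \circ g) \circ \row = \mathrm{gyration} \circ (\Phi \circ g)$, which is exactly the claimed equivariant bijection.

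There is no real obstacle here beyond bookkeeping, since both ingredients have been established. The only point requiring minor care is to make sure that the conjugator $g$ exists as an element of the toggle group (so that it indeed defines a bijection on $J(\aposet_n)$) and that the equivariance in Proposition~\ref{prop:asmgyr} is taken as a genuine intertwining of dynamical systems rather than merely a coincidence of names; both are immediate from the references already cited. Thus the theorem follows by composing the two known equivariant bijections.
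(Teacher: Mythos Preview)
Your proposal is correct and matches the paper's approach exactly: the paper does not give a standalone proof of this theorem but simply remarks that it follows from the two immediately preceding propositions (the conjugacy of $\row$ and $\gyr$, and Proposition~\ref{prop:asmgyr}), which is precisely the composition you spell out. Your added care in writing the conjugator $g$ explicitly and verifying the intertwining relations is a faithful unpacking of what the paper leaves implicit.
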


Given the characterization from Proposition~\ref{prop:asmgyr} of Wieland's gyration on fully-packed loop configurations in terms of the toggle group action $\gyr$, we now prove the following theorem about the toggleability statistic $\togstat_p$ on any ranked poset. We then specialize to alternating sign matrices in Corollary~\ref{cor:asmcor}.
\begin{theorem}
\label{thm:genposet}
Given any finite ranked poset $P$ and $p\in P$, $\togstat_p$ is 0-mesic with respect to $\gyr$.
\end{theorem}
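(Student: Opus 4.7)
The plan is to exhibit $\togstat_p(I)$ as a telescoping ``coboundary'' of the form $[p\in\gyr(I)] - [p\in I]$ (choosing between $\gyr$ and $\gyr^{-1}$ according to the parity of $\operatorname{rank}(p)$), so that its sum over any orbit collapses to $0$.

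First, decompose $\gyr = \gyr_o\circ\gyr_e$, where $\gyr_e$ (resp.\ $\gyr_o$) is the product of the toggles $t_q$ over all $q$ of even (resp.\ odd) rank. Two elements of the same rank-parity cannot be in a cover relation, so by Lemma~\ref{lem:commute} these products are well-defined commuting compositions of involutions, and each of $\gyr_e,\gyr_o$ is itself an involution. The crucial \emph{non-interference} observation is that if $q\neq p$ has the same rank-parity as $p$, then $t_q$ affects neither the membership of $p$ in the ideal nor the membership of any element covering or covered by $p$ (all neighbors of $p$ have opposite parity to $q$). Consequently, during the $\gyr_e$ phase (resp.\ the $\gyr_o$ phase), the only toggle that can alter whether $p$ lies in the ideal is $t_p$ itself, provided $p$ is of even (resp.\ odd) rank; and $t_p$ fires precisely when $\togstat_p(I)\neq 0$ as measured from the original input~$I$.

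Now suppose $\operatorname{rank}(p)$ is even. By the non-interference observation, $t_p$ acts in the $\gyr_e$ phase exactly as if applied to $I$ alone; unpacking Definition~\ref{def:toggleability} gives
\[[p\in\gyr_e(I)] - [p\in I] \;=\; \togstat_p(I).\]
Since the subsequent $\gyr_o$ phase toggles only odd-rank elements, $[p\in\gyr(I)] = [p\in\gyr_e(I)]$, so the displayed identity persists with $\gyr_e$ replaced by $\gyr$. Summing along any $\gyr$-orbit $\mathcal O$, the left-hand side telescopes to $0$, yielding $\sum_{I\in\mathcal O}\togstat_p(I) = 0$. For $\operatorname{rank}(p)$ odd, apply the same reasoning to $\gyr^{-1}$: because $\gyr_e$ and $\gyr_o$ are involutions, $\gyr^{-1} = \gyr_e\circ\gyr_o$ puts the odd-rank phase first, and the parities-swapped argument gives $[p\in\gyr^{-1}(I)] - [p\in I] = \togstat_p(I)$. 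Every orbit of $\gyr^{-1}$ coincides set-theoretically with an orbit of $\gyr$, so the telescoping collapse again delivers the desired homomesy.

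The main obstacle is the careful statement of the non-interference observation; once that is justified (using only that cover relations change rank by exactly one and Lemma~\ref{lem:commute}), the rest is a one-line telescoping argument. Equivalently, one can phrase the whole proof as verifying that the function $g(I) := [p\in I]$ satisfies $\togstat_p = g\circ\gyr - g$ when $\operatorname{rank}(p)$ is even and $\togstat_p = g\circ\gyr^{-1} - g$ when $\operatorname{rank}(p)$ is odd.
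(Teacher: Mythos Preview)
Your proof is correct, and it takes a genuinely different route from the paper's. The paper argues by a four-case transition analysis (tracking the pair $(\togstat_p(I),\,p\in I)$) to conclude that the nonzero values of $\togstat_p$ alternate in sign along any $\gyr$-orbit. You instead exhibit the explicit coboundary identity $\togstat_p(I)=[p\in\gyr^{\pm 1}(I)]-[p\in I]$ and telescope. Your approach is slicker: once the non-interference observation is stated, the identity is immediate and the $0$-mesy falls out in one line, with no need to enumerate cases or draw a transition diagram. The paper's case analysis, on the other hand, makes the mechanism more visibly dynamical---one can read off exactly which transitions occur and see the ``alternation with possible zeros in between'' structure directly---which is closer in spirit to how the analogous Lemma~\ref{lem:rowhomomesy} for rowmotion is argued. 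Note also that your identity in fact \emph{encodes} the paper's transition diagram: the four cases there are precisely the four possibilities for the pair $([p\in I],[p\in\gyr(I)])$, and the arrows follow from $\togstat_p$ being their difference.
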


\begin{proof}
Fix any $p\in P$. We claim that as you go through an orbit of $\gyr$, the nonzero values of $\togstat_p$ alternate in sign. Suppose $p$ is in a rank of even parity. (If $p$ is in a rank of odd parity, the argument follows similarly by considering the action of $\gyr^{-1}$.)

\emph{Case 1:} If $\togstat_p(I)=-1$, then when the even ranks of $P$ are toggled, $p$ is removed from $I$. Then in the odd toggle step, if no element $p$ covers is removed, then $\togstat_p(\gyr(I))=1$ (\emph{Case 2}), otherwise, $\togstat_p(\gyr(I))=0$ and $p\notin \gyr(I)$ (\emph{Case 4}). 

\emph{Case 2:} If $\togstat_p(I)=1$, then when the even ranks of $P$ are toggled, $p$ is added to the order ideal. Then in the odd rank toggle step, if no element covering $p$ is added, then $\togstat_p(\gyr(I))=-1$ (\emph{Case 1}). Otherwise,  $\togstat_p(\gyr(I))=0$ and $p\in \gyr(I)$ (\emph{Case 3}). 

\emph{Case 3:} If $\togstat_p(I)=0$ and $p\in I$, then since $p$ is not toggleable, there must be some $q$ covering $p$ such that $q\in I$. So when the even ranks are toggled, $p$ remains in the order ideal. Thus $\togstat_p(\gyr(I))$ is either equal to $-1$ (\emph{Case 1}) or equals $0$ such that $\gyr(I)$ contains $p$ (\emph{Case 3}). 

\emph{Case 4:} If $\togstat_p(I)=0$ and $p\notin I$, then since $p$ is not toggleable, there must be some $q$ covered by $p$ such that $q\notin I$. So when the elements in even ranks are toggled, $p$ continues to not be in the order ideal. Thus $\togstat_p(\gyr(I))$ cannot equal $-1$ and either equals $1$ (\emph{Case 2}) or $0$ and $p\notin \gyr(I)$ (\emph{Case~4}).

Thus, the action of $\gyr$ is as in the following graphic.
\begin{center}
\includegraphics[scale=.75]{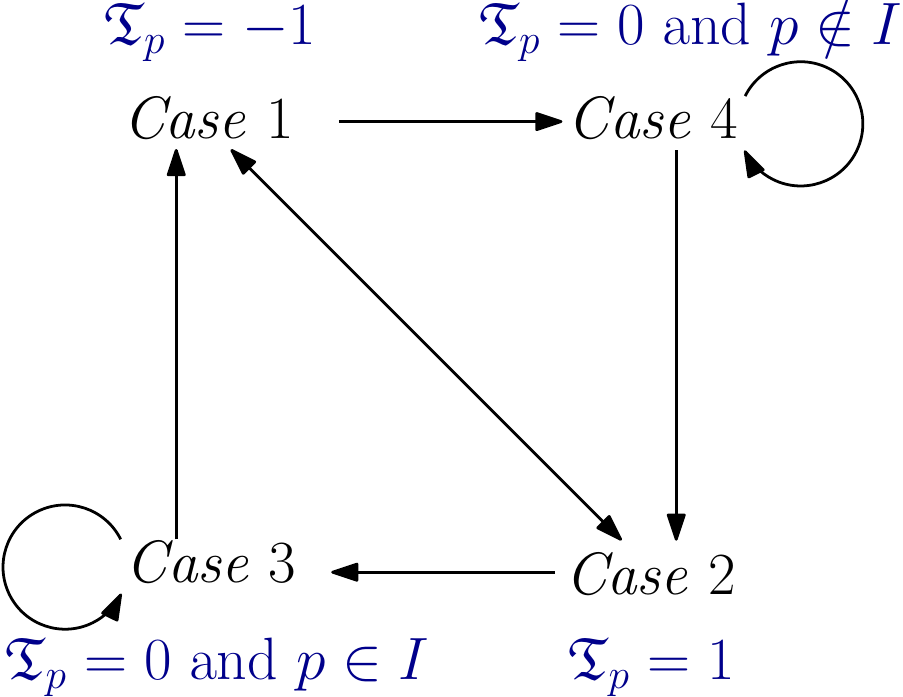}
\end{center}
Therefore, the nonzero values of $\togstat_p$ alternate in sign throughout an orbit of $\gyr$.
\end{proof}

Lemma 4.1 from Cantini and Sportiello's 2011 proof of the Razumov-Stroganov conjecture follows from this theorem, when we specialize to the ASM poset. We have listed a restatement of it here as a corollary.
\begin{corollary}[Lemma 4.1 from \cite{razstrogpf}]
\label{cor:asmcor}
Fix any square $\alpha$ in the $[n]\times [n]$ grid. Then the number of FPLs in an orbit of gyration with edge configuration $|\alpha|$ equals the number with configuration $\underline{\overline{\mbox{ }\alpha\mbox{ }}}$. 
\end{corollary}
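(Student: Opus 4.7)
The plan is to derive the corollary from Theorem~\ref{thm:genposet} applied to $\aposet_n$, using the dictionary among FPLs, height-function matrices, and order ideals of $\aposet_n$ provided by Propositions~\ref{prop:oihf}, \ref{prop:fplhf}, \ref{prop:togcorresp}, and~\ref{prop:asmgyr}. Under this dictionary, a gyration orbit on FPLs corresponds to a $\gyr$-orbit on $J(\aposet_n)$, and the two parallel edge configurations $|\alpha|$ and $\underline{\overline{\mbox{ }\alpha\mbox{ }}}$ at the fixed square $\alpha$ at position $(i,j)$ correspond to the two possible parallel values of the height-matrix entry $h_{i,j}$ (with the four non-parallel configurations corresponding to all other values).

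Given the chain $S_{i,j} \subseteq \aposet_n$ from Proposition~\ref{prop:oihf}, I would define
\[
  f_\alpha(I) \;:=\; \sum_{p \in S_{i,j}} \togstat_p(I)
\]
on $J(\aposet_n)$, so that, applying Theorem~\ref{thm:genposet} to each $p \in S_{i,j}$ and summing, $f_\alpha$ is $0$-mesic with respect to $\gyr$, equivalently (by Proposition~\ref{prop:asmgyr}) with respect to FPL gyration.

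The main content is to show $f_\alpha(I) \in \{-1,0,+1\}$ and that its two nonzero values pick out exactly the two parallel configurations at $\alpha$. Since $S_{i,j}$ is a chain, $I \cap S_{i,j}$ is an initial segment, so the only candidate toggleable elements of $S_{i,j}$ are the top of $I \cap S_{i,j}$ (possibly removable) and the bottom of $S_{i,j} \setminus I$ (possibly addable). I would then argue that these cannot both be toggleable at once: all elements of $S_{i,j}$ lie in ranks of a single parity (Proposition~\ref{prop:oihf}), so two consecutive chain elements $p_1 < p_2$ in $S_{i,j}$ have rank difference $2$ in $\aposet_n$; hence there is an intermediate $q \in \aposet_n$ with $p_1 < q < p_2$, and simultaneously assuming $p_1 \in I_{\max}$ and $p_2 \in (P \setminus I)_{\min}$ forces $q \notin I$ (since $q > p_1$) and $q \in I$ (since $q < p_2$), a contradiction. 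This pins down $f_\alpha(I) \in \{-1,0,+1\}$.

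Finally, Proposition~\ref{prop:togcorresp} identifies toggling an element of $S_{i,j}$ into $I$ with decrementing $h_{i,j}$ by $2$, and toggling out with incrementing $h_{i,j}$ by $2$; since the parity of $h_{i,j}$ is constant across all FPLs by the height-function boundary conditions, the edge rule of Proposition~\ref{prop:fplhf} sends these two operations respectively to the two parallel configurations $|\alpha|$ and $\underline{\overline{\mbox{ }\alpha\mbox{ }}}$ (with the assignment depending only on $\alpha$, not on $I$). Summing $f_\alpha$ over any gyration orbit and invoking $0$-mesy then gives the desired equality of the two configuration counts. I expect the main obstacle to be the rank-parity/intermediate-element argument ruling out simultaneous toggleability of two chain elements; everything else is unpacking Propositions~\ref{prop:oihf}--\ref{prop:togcorresp}.
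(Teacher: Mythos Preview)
Your proposal is correct and follows essentially the same route as the paper's proof: define the summed statistic $\sum_{p\in S_{i,j}}\togstat_p$, identify it with the $\{-1,0,+1\}$-valued indicator $\mathcal{N}_\alpha$ of parallel edge configurations via Propositions~\ref{prop:oihf}, \ref{prop:fplhf}, \ref{prop:togcorresp}, and conclude $0$-mesy from Theorem~\ref{thm:genposet} and Proposition~\ref{prop:asmgyr}. Your argument that at most one element of $S_{i,j}$ is toggleable (via an intermediate-rank element between consecutive chain members) is a more explicit version of the paper's one-line observation that $S_{i,j}$ is a chain with no internal covering relations; both rest on the same rank-parity fact noted in the proof of Proposition~\ref{prop:oihf}.
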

\begin{proof}
As in Lemma 4.1 of Cantini-Sportiello, let $\mathcal{N}_{\alpha}$ equal $-1$ if the FPL edge configuration around $\alpha$ is $\underline{\overline{\mbox{ }\alpha\mbox{ }}}$, $+1$ if it is $|\alpha|$, and 0 otherwise. Via the bijections of Propositions~\ref{prop:oihf} and~\ref{prop:fplhf}, the square $\alpha$ in row $i$ column $j$ in the grid corresponds to the chain $S_{i,j}=\{(i-1-t,j-1-t,t)\}\subseteq \aposet_n$; see Figure~\ref{ex:aposetgyr}. We noted in the proof of Proposition~\ref{prop:oihf} that the chain $S_{i,j}$ consists of poset elements all of which are in ranks of the same parity. Since $S_{i,j}$ is a chain of elements in which no two have a covering relation, if we fix any order ideal $I$, there will be at most one element $p\in S_{i,j}$ such that $\togstat_p(I)\neq 0$. By Proposition~\ref{prop:togcorresp}, summing the statistics on all the poset elements in $S_{i,j}$ gives the corresponding statistic on the square $\alpha$, that is, for any $I\in J(P)$, $\sum_{p\in S_{i,j}} \togstat_p(I) = \mathcal{N}_{\alpha}(I)$. Since each $\togstat_p$ is 0-mesic on orbits of $\gyr$, $\mathcal{N}_{\alpha}$ is 0-mesic on orbits of gyration. Thus the statement follows from the definition of $\mathcal{N}_{\alpha}$. 
\end{proof}
Note that Corollary~\ref{cor:asmcor} is actually a stronger statement, as the toggleability statistic $\togstat_p$ is a refinement of the statistic $\mathcal{N}_{\alpha}$ from \cite{razstrogpf}.

\section*{Acknowledgments}
The author thanks Roger Behrend and the anonymous referee for many helpful comments.

\end{document}